\newtheorem{thm}{Theorem}[section]
\newtheorem{cor}[thm]{Corollary}
\newtheorem{lem}[thm]{Lemma}
\newtheorem{prop}[thm]{Proposition}
\theoremstyle{definition}
\newtheorem{defn}[thm]{Definition}
\theoremstyle{remark}
\newtheorem*{ex}{Example}
\numberwithin{equation}{section}
\newcommand{\T}{\mathcal{T}}
\newcommand{\R}{\mathbb{R}}
\title{Betweenness and Nonbetweenness}
\author[Atkins]{Ross Atkins}
\address{
University of Oxford \\
Department of Statistics \\
1 South Parks Road \\
Oxford OX1 3TG \\
United Kingdom}
\email{ross.atkins@univ.ox.ac.uk}
\newcommand{\x}{\mathbf{x}}
\newcommand{\y}{\mathbf{y}}
\newcommand{\z}{\mathbf{z}}
\newcommand{\ord}{\mbox{ord}}
\newcommand{\NBET}{\mbox{NBET}}
\newcommand{\BET}{\mbox{BET}}
\newcommand{\nbet}{\mbox{nbet}}
\newcommand{\bet}{\mbox{bet}}
\newcommand{\ETP}{\mbox{ETP}}
\newcommand{\EPT}{\mbox{EPT}}
\newcommand{\cat}{\mbox{cat}}
\begin{document}

\begin{abstract}
The betweenness function $\bet(n)$ is the minimum number of total orderings of $n$ objects such that for any three distinct objects $a$, $b$ and $c$, there is an ordering in which $b$ is between $a$ and $c$. The nonbetweenness function $\nbet(n)$ is the minimum number of total orderings such that for any three distinct objects $a$, $b$ and $c$, there is an ordering in which $b$ is not between $a$ and $c$. We show that $\nbet(n) = \left\lceil \log_2\log_2n \right\rceil+1$ and $\bet(n) = \Theta(\log n)$. Betweenness and Nonbetweenness are specific cases of a more general extreme value function called the \emph{extreme ternary constraint function} (definition given in Section~\ref{sec:etp}). The asymptotic value of this generalisation is computed using the values of $\nbet(n)$ and $\bet(n)$. This result demonstrates that the minimum size of a set of rooted phylogenetic trees is consistent with all phylogenetic triplets is $\Theta(\log\log n)$. 
\end{abstract}

\maketitle

\section{Notation and Definitions}

Our motivation for studying betweenness and nonbetweenness comes from the application to phylogenetic triplets \cite{Semple2003phylogenetics}. However, since betweenness and nonbetweenness are rich topics in their own right, the phylogenetic applications are left until Section~\ref{sec:phylo}. In this section we define betweenness and nonbetweenness and state the two main theorems, Theorems~\ref{thm:nbet} and~\ref{thm:bet}, which are proven in Section~\ref{sec:nbet} and Section~\ref{sec:bet} respectively. Section~\ref{sec:etp} focuses on a generalisation called $\ETP[\Pi]$ (the extreme ternary constraint problem).

\begin{defn}
Let $S_3 = \{ 123,132,213,231,312,321 \}$ be the group of permutations on $3$ objects.  
Let $n \geq 3$ be an integer and let $[n]$ denote the set of the first $n$ positive integers.
	$$ [n] = \{ 1,2,3, \ldots ,n \}. $$
An \emph{ordering} (total ordering on $[n]$) is a permutation $\phi : [n] \rightarrow [n]$. 
For an ordering $\phi$ and any distinct $x,y \in [n]$, we sometimes say \emph{$x$ comes before $y$ in $\phi$} to mean $\phi(x)<\phi(y)$. 
Otherwise $\phi(x)>\phi(y)$ and we would say \emph{$x$ comes after $y$}. 
We sometimes express $\phi$ as the $n$-tuple:
	$$ \phi = \big( \phi^{-1}(1) , \phi^{-1}(2) , \phi^{-1}(3) , \ldots , \phi^{-1}(n) \big). $$ 
$$ \mbox{For example, } \qquad \phi = (2,3,1) \quad \Longleftrightarrow \quad \phi(1) = 3 \; , \; \phi(2) = 1 \; \mbox{ and } \; \phi(3) = 2. $$ 
A \emph{ternary constraint} is a triple $\x = (x_1,x_2,x_3)$ of distinct integers: $x_1,x_2,x_3 \in [n]$. For an ordering $\phi$ and a ternary constraint $\x = (x_1,x_2,x_3)$, let the \emph{relative order} of $\x$ given by $\phi$ be 
	$$ \ord(\phi,\x) = abc $$ 
where $abc \in S_3$ such that $\phi(x_a) < \phi(x_b) < \phi(x_c)$.
\end{defn}

\begin{defn}
An ordering $\phi$ is said to \emph{between-satisfy} a ternary constraint $\x = (x_1,x_2,x_3)$ if and only if $x_2$ comes before one of $x_1$ or $x_3$, and comes after the other. \emph{i.e.} $\phi$ between-satisfies $\x$ if and only if 
	$$ \ord(\phi,\x) = 123 \quad \mbox{or} \quad \ord(\phi,\x) = 321. $$
An ordering is said to \emph{nonbetween-satisfy} a constraint if and only if it does not between-satisfy the constraint. \emph{i.e.} 
	$$ \phi \mbox{ nonbetween-satisfies } \x \quad \Longleftrightarrow \quad \ord(\phi,\x) \in \{ 132 , 213 , 231 , 312 \}. $$
Any set of orderings on $[n]$ is called an \emph{order-system}. An order-system, $\Phi$, is said to \emph{between-satisfy} a constraint, $\x$, if and only if $\x$ is between-satisfied by at least one $\phi \in \Phi$. Similarly, $\Phi$ \emph{nonbetween-satisfies} $\x$, if and only if $\x$ is nonbetween-satisfied by at least one $\phi \in \Phi$.
\end{defn}

\begin{ex}
For $n=5$ consider the order-system $\Phi = \{ \phi , \psi \}$ where $\phi = (1,2,3,4,5)$ and $\psi = (4,5,1,2,3)$. \emph{i.e.} $\phi(i)=i$ for all $i$ and: 
	$$ \psi(4)=1, \psi(5) = 2, \psi(1)=3, \psi(2)=4 \mbox{ and } \psi(3)=5. $$
Also consider the constraints $\x = (1,2,3)$, $\y = (2,4,5)$ and $\z = (1,5,4)$. We can verify that $\Phi$ between-satisfies $\x$, $\y$ and $\z$ because: 
\begin{itemize}
\item Both $\phi$  and $\psi$ between-satisfy $\x = (1,2,3)$ because $\ord(\phi,\x) = \ord(\psi,x) = 123$. 
\item $\phi$ between-satisfies $\y = (2,4,5)$ because $\phi(2) < \phi(4) < \phi(5)$ (i.e. $\ord(\phi,\y) = 123$). 
\item $\psi$ between-satisfies $\z = (1,5,4)$ because $\psi(1) > \psi(5) > \psi(4)$ (i.e. $\ord(\psi,\z) = 321$). 
\end{itemize} 
We can also say that $\Phi$ nonbetween-satisfies $\y$ and $\z$ because: 
\begin{itemize}
\item $\psi$ nonbetween-satisfies $\y$ since $4$ comes before both $2$ and $5$ in $\psi$ ($\ord(\psi,\y) = 231$). 
\item $\phi$ nonbetween-satisfies $\z$ since $5$ comes after both $1$ and $4$ in $\phi$ ($\ord(\phi,\z) = 132$). 
\end{itemize} 
However $\Phi$ does not nonbetween-satisfy $\x$ because neither $\phi$ nor $\psi$ nonbetween-satisfy $\x$. 
\end{ex}

\begin{defn}
For any integer $n \geq 3$:
\begin{itemize}
\item Let $\NBET = \NBET(n)$ be the collection of all order-systems on $[n]$ which nonbetween-satisfy all ternary constraints. 
\item Let $\BET = \BET(n)$ be the collection of all order-systems on $[n]$ which between-satisfy all ternary constraints.
\item Let $\nbet(n)$ and $\bet(n)$ denote the minimum size of any order-system in $\NBET(n)$ and $\BET(n)$ respectively. \emph{i.e.} 
	$$ \nbet(n) = \min \big\{ |\Phi| : \Phi \in \NBET(n) \big\} \qquad \mbox{and} \qquad 
	\bet(n) = \min \big\{ |\Phi| : \Phi \in \BET(n) \big\}. $$
\end{itemize}
\end{defn}

Figure~\ref{fig:tableOfValues} displays the values of $\nbet(n)$ and $\bet(n)$ for all $n$ up to $7$. Theorem~\ref{thm:nbet} presents the precise value of $\nbet(n)$x and Theorem~\ref{thm:bet} presents the asymptotics of $\bet(n)$ as $n \rightarrow \infty$.

\begin{figure}
$$ \begin{array}{l|cccccc}
n & 3 & 4 & 5 & 6 & 7 \\
\hline \\[-3mm] 
\bet(n) & 3 & 4 & 5 & 5 & 5 \\
\nbet(n) & 2 & 2 & 3 & 3 & 3 
\end{array} $$
\caption{The table of $\bet(n)$ and $\nbet(n)$ for $3 \leq n \leq 7$. The values of $\nbet$ come from Theorem~\ref{thm:nbet} and the values of $\bet$ are computed in Appendix~\ref{app:computing_bet(n)}.}
\label{fig:tableOfValues}
\end{figure}

\begin{thm}
\label{thm:nbet}
For all $n \geq 3$, 
	$$ \nbet(n) = \left\lceil \log_2\log_2n \right\rceil+1. $$
\end{thm}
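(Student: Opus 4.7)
My approach is to reduce the theorem to a question about monotone triples in permutations, then apply Erd\H{o}s--Szekeres for the lower bound and a recursive doubling construction for the upper bound. Since relabeling elements preserves the nonbetween property, in any order-system realising $\nbet(n)$ I may assume one ordering equals the identity. This identity nonbetween-satisfies the triple $(x_1,x_2,x_3)$ precisely when $x_2$ is not the integer median of $\{x_1,x_2,x_3\}$, so the remaining orderings need only nonbetween-satisfy triples of the form $(a,b,c)$ with $a<b<c$, which is equivalent to being non-monotone on $(a,b,c)$. Hence $\nbet(n)-1$ equals $\mu(n)$, the minimum number of permutations of $[n]$ such that no triple $a<b<c$ is simultaneously monotone in all of them, and the theorem reduces to proving $\mu(n) = \lceil \log_2\log_2 n \rceil$.

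For the lower bound I would induct on $m$ to show that any $m$ permutations of $[n]$ with $n > 2^{2^m}$ contain a common monotone triple. The base case $m=0$ is vacuous. For the inductive step, apply the Erd\H{o}s--Szekeres theorem to $\pi_1$ viewed as a sequence of length $n > (2^{2^{m-1}})^2$: this yields a subset $S \subseteq [n]$ of size exceeding $2^{2^{m-1}}$ on which $\pi_1$ is monotone, so $\pi_1$ is monotone on every triple in $S$. The inductive hypothesis applied to $\pi_2|_S,\ldots,\pi_m|_S$ produces a common monotone triple within $S$, which together with $\pi_1$ becomes a common monotone triple for all $m$ permutations.

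For the upper bound I would give a recursive doubling construction: from a nonbetween-satisfying system $\{\phi_1,\ldots,\phi_{k-1}\}$ on $[N]$ with $N = 2^{2^{k-2}}$, produce a system of size $k$ on $[N^2]$. Label $[N^2]$ by pairs $(a,b)\in[N]^2$ and define
\begin{align*}
\psi_s(a,b) &= (\phi_s(a)-1)N + \phi_s(b), \quad s=1,\ldots,k-1, \\
\psi_k(a,b) &= (\phi_1(a)-1)N + (N+1-\phi_1(b)).
\end{align*}
To verify that $\{\psi_1,\ldots,\psi_k\}$ nonbetween-satisfies every triple I would split on how many of $a_1,a_2,a_3$ coincide, where $(a_2,b_2)$ is the designated middle. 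If the $a$-coordinates are all equal or all distinct, the condition reduces coordinate-wise to the corresponding triple $(b_1,b_2,b_3)$ or $(a_1,a_2,a_3)$ in $[N]$, which is handled by one of $\psi_1,\ldots,\psi_{k-1}$ by induction; if $a_1 = a_3 \neq a_2$ then $(a_2,b_2)$ is automatically extreme in every $\psi_s$; the only delicate subcase is when exactly two of $a_1,a_2,a_3$ equal $a_2$.

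The main obstacle I expect is the sign-agreement verification in this final subcase. For example, when $a_1 = a_2 \neq a_3$, a short calculation shows that $\psi_s$ nonbetween-satisfies the triple iff $\operatorname{sign}(\phi_s(a_3)-\phi_s(a_2)) \neq \operatorname{sign}(\phi_s(b_2)-\phi_s(b_1))$ for $s \leq k-1$, whereas $\psi_k$ nonbetween-satisfies it iff these two signs agree at $s=1$. These conditions are complementary at the $s=1$ coordinate, so at least one of $\psi_1$ or $\psi_k$ always works; this completes the inductive step and gives $\nbet(N^2) \leq k$, establishing $\nbet(n) \leq \lceil \log_2\log_2 n \rceil + 1$.
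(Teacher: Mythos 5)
Your proposal is correct and follows essentially the same route as the paper: both reduce (via the WLOG-identity normalisation) to avoiding triples that are monotone in all remaining orderings, both obtain the lower bound by iterating Erd\H{o}s--Szekeres once per ordering, and your recursive doubling $\psi_s(a,b)=(\phi_s(a)-1)N+\phi_s(b)$ with the one reversed copy $\psi_k$ is exactly the paper's tight high-dimensional Erd\H{o}s--Szekeres construction (Lemma~\ref{lem:ES_is_tight} with $m=2$), written in pair coordinates. The only difference is presentational: the paper verifies the construction by showing the associated point sequence in $\R^d$ has no monotone subsequence of length $3$ (splitting on whether the last coordinate increases or decreases), whereas you verify the nonbetween property directly by a case analysis on coinciding $a$-coordinates, and your sign computation in the delicate subcase checks out.
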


\begin{thm}
\label{thm:bet}
As $n \rightarrow \infty$, 
	$$ \bet(n) = \Theta( \log n ). $$
\end{thm}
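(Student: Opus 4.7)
The plan is to prove the two directions of the $\Theta$-bound separately: an upper bound $\bet(n) = O(\log n)$ via the probabilistic method, and a matching lower bound $\bet(n) = \Omega(\log n)$ via a signature argument that reformulates the problem as biclique covering.

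For the upper bound, I would sample $k$ orderings $\phi_1,\ldots,\phi_k$ independently and uniformly at random from the $n!$ permutations of $[n]$. For any fixed ternary constraint $\x = (x_1,x_2,x_3)$, the relative order $\ord(\phi_i,\x)$ is uniform on $S_3$, and exactly two of its six possible values (namely $123$ and $321$) between-satisfy $\x$, so each $\phi_i$ fails to between-satisfy $\x$ with probability $2/3$. Hence the probability that no $\phi_i$ between-satisfies $\x$ is $(2/3)^k$, and since there are fewer than $n^3$ ternary constraints, the union bound gives a failure probability of at most $n^3 (2/3)^k$. Choosing $k = \lceil C \log n \rceil$ for a sufficiently large absolute constant $C$ makes this strictly less than $1$, so by the probabilistic method a valid order-system of size $O(\log n)$ exists.

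For the lower bound, fix an arbitrary element $b \in [n]$ and suppose $\Phi = \{\phi_1,\ldots,\phi_k\} \in \BET(n)$. For each $v \in [n] \setminus \{b\}$, define a signature $\sigma(v) \in \{L,R\}^k$ by setting $\sigma(v)_i = L$ if $\phi_i(v) < \phi_i(b)$ and $\sigma(v)_i = R$ otherwise. The key observation is that for any distinct $u,v \in [n]\setminus\{b\}$, the constraint $(u,b,v)$ must be between-satisfied by some $\phi_i$, which forces $u$ and $v$ to lie on opposite sides of $b$ in $\phi_i$, and hence $\sigma_i(u) \neq \sigma_i(v)$. Therefore $\sigma$ is injective on a set of size $n-1$, which forces $2^k \geq n-1$ and thus $k \geq \lceil \log_2(n-1) \rceil = \Omega(\log n)$.

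The main obstacle is identifying the correct structural invariant for the lower bound. The natural reformulation is that for each fixed $b$, the orderings in $\Phi$ induce $k$ complete bipartite graphs on $[n] \setminus \{b\}$ (the bipartition being determined by the position of $b$ in each $\phi_i$), and their union must cover $K_{n-1}$; the classical biclique-cover lower bound then yields $k = \Omega(\log n)$. The upper bound is more routine and could alternatively be derandomised via an explicit construction using binary labels on $[n]$, but the probabilistic argument is the cleanest way to match the asymptotics.
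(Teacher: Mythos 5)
Your proposal is correct. The lower bound is essentially the paper's own argument: the paper also fixes a distinguished element (it uses $n$, after normalising one ordering to the identity via Proposition~\ref{prop:wlog}) and defines the same left/right signature map into $\{0,1\}^k$, whose injectivity is forced by the constraints $(x,n,y)$; the only difference is that the normalisation lets the paper drop one coordinate and obtain the slightly sharper $\bet(n) \geq \log_2(n-1)+1$, whereas your version gives $\log_2(n-1)$ --- immaterial for the $\Omega(\log n)$ statement. Your upper bound, however, takes a genuinely different route. The paper constructs an explicit order-system of size $2\lceil \log_2 n \rceil$: it takes $k$ orderings $\psi_i$ (cyclic shifts of binary digits) that separate every pair of elements across the midpoint $n/2$, and pairs each $\psi_i$ with a ``half-rotated'' companion $\phi_i$ that between-satisfies any constraint $(x,\alpha,y)$ that $\psi_i$ separates but fails. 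Your probabilistic argument is shorter and its correctness is easy to check (the relative order of three fixed elements under a uniform random permutation is uniform on $S_3$, so the failure probability per constraint per ordering is $2/3$, and the union bound over fewer than $n^3$ constraints closes the argument), but it is non-constructive and the constant it yields, roughly $3\log_{3/2} n \approx 5.1 \log_2 n$, is noticeably worse than the paper's $2\log_2 n$, which nearly matches the lower bound and the small-case data in Figure~\ref{fig:tableOfValues}. Both approaches are valid proofs of the $\Theta(\log n)$ asymptotics.
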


Theorem~\ref{thm:nbet} is proven in Section~\ref{sec:nbet} and Theorem~\ref{thm:bet} is proven in Section~\ref{sec:bet}. In Section~\ref{sec:etp}, the notion of betweenness and nonbetweenness is generalised, and the asymptotic value of the corresponding extreme value is computed. A phylogenetic application of these results is finally given in Section~\ref{sec:phylo}. We now conclude this section with a Proposition.

\begin{prop}
\label{prop:wlog}
If $\Phi \in \BET(n)$, then for any permutation $\sigma \in S_n$ we have $\Phi\sigma \in \BET(n)$, where 
	$$ \Phi\sigma = \{ \phi \circ \sigma : \phi \in \Phi \}. $$
Similarly, $\Phi \in \NBET(n)$ if and only if $\Phi\sigma \in \NBET(n)$.
\end{prop}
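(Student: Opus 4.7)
The plan is to reduce everything to a single composition identity: for any ordering $\phi$, any $\sigma \in S_n$, and any ternary constraint $\x = (x_1,x_2,x_3)$, we have $(\phi \circ \sigma)(x_i) = \phi(\sigma(x_i))$, whence
$$ \ord(\phi \circ \sigma, \x) = \ord\bigl(\phi, \sigma(\x)\bigr), $$
where $\sigma(\x) := (\sigma(x_1),\sigma(x_2),\sigma(x_3))$. Since the definition of between-satisfies depends only on whether $\ord(\cdot,\cdot) \in \{123,321\}$, this identity immediately gives that $\phi \circ \sigma$ between-satisfies $\x$ if and only if $\phi$ between-satisfies $\sigma(\x)$, and the analogous equivalence holds for nonbetween-satisfies.

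Given the identity, the rest is essentially bookkeeping. The map $\x \mapsto \sigma(\x)$ is a bijection on the set of ternary constraints because $\sigma$ is a bijection on $[n]$. So, to check $\Phi\sigma \in \BET(n)$ when $\Phi \in \BET(n)$, I would take an arbitrary constraint $\x$, observe that $\sigma(\x)$ is also a constraint, pick $\phi \in \Phi$ between-satisfying $\sigma(\x)$ (which exists by hypothesis), and conclude via the identity that $\phi \circ \sigma \in \Phi\sigma$ between-satisfies $\x$. Exactly the same argument, with between-satisfies replaced by nonbetween-satisfies, handles the forward direction of the $\NBET$ case.

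For the reverse direction of the $\NBET$ equivalence, I would simply apply the forward direction to the order-system $\Phi\sigma$ and the permutation $\sigma^{-1}$, using $(\Phi\sigma)\sigma^{-1} = \Phi$ (which follows from associativity of composition). I do not foresee any genuine obstacle: the proposition is really just the formal statement that the labels $1,2,\ldots,n$ play no distinguished role, and the only content is the composition identity above. The main thing to be careful about is keeping the left/right action conventions straight so that $\Phi\sigma$ is composed on the correct side.
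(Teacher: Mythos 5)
Your proposal is correct and follows essentially the same route as the paper: both hinge on the identity $\ord(\phi\circ\sigma,\x)=\ord(\phi,\sigma(\x))$ and the observation that $\x\mapsto\sigma(\x)$ ranges over all constraints. Your extra remark on deducing the reverse $\NBET$ implication by applying the forward direction to $\sigma^{-1}$ is a small detail the paper leaves implicit, but it is not a different method.
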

\begin{proof}
For an arbitrary constraint $\x = (x_1,x_2,x_3)$, consider the constraint $\x^\prime = (\sigma(x_1),\sigma(x_2),\sigma(x_3))$. For any ordering $\phi$, if $abc \in S_3$ is such that $\phi(\sigma(x_a)) < \phi(\sigma(x_b)) < \phi(\sigma(x_c))$ then 
	$$ \ord(\phi \circ \sigma,\x) = abc = \ord(\phi,\x^\prime). $$
\begin{itemize}
\item If $\Phi \in \BET(n)$ then there is some $\phi \in \Phi$  such that $\phi$ between-satisfies $\x^\prime$ and so $\ord(\phi,\x^\prime)$ is either $123$ or $321$. Therefore $\Phi\sigma \in \BET(n)$ because 
	$$ \ord(\phi \circ \sigma,\x) = \ord(\phi,\x^\prime) \in \{ 123,321 \}. $$
\item If $\Phi \in \NBET(n)$ then there is some $\phi \in \Phi$  such that $\phi$ nonbetween-satisfies $\x^\prime$ and so $\ord(\phi,\x^\prime)$ is $132$, $213$, $231$ or $312$. Therefore $\Phi\sigma \in \NBET(n)$ because 
	$$ \ord(\phi \circ \sigma,\x) = \ord(\phi,\x^\prime) \in \{ 132,213,231,312 \}. $$
\end{itemize}
\end{proof}

\section{Nonbetweenness}
\label{sec:nbet}

The results in this section are dedicated to the proof of Theorem~\ref{thm:nbet}. The proofs in this section fundamentally rely on Theorem~\ref{thm:ES} which is due to Erd\H{o}s and Szekeres \cite{Erdos1935combinatorial}. A proof is given here for completeness. Lemma~\ref{lem:ES_is_tight} demonstrates that the bound given in the Erd\H{o}s-Szekeres theorem is tight. 

\begin{thm} \cite{Erdos1935combinatorial}
Any sequence of $m^2+1$ real numbers contains a monotonic subsequence of length $m+1$. 
\label{thm:ES}
\end{thm}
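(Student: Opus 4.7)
The plan is to apply a classical double-pigeonhole argument. For each index $i$ in the sequence $a_1, a_2, \ldots, a_{m^2+1}$, I would attach a pair $(u_i, d_i)$, where $u_i$ is the length of the longest strictly increasing subsequence ending at $a_i$ and $d_i$ is the length of the longest strictly decreasing subsequence ending at $a_i$. These are positive integers, bounded above by the lengths of the longest monotonic subsequences in the whole sequence.

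Suppose for contradiction that no monotonic subsequence of length $m+1$ exists. Then every pair $(u_i, d_i)$ lies in $\{1, \ldots, m\} \times \{1, \ldots, m\}$, a set of cardinality $m^2$. Since there are $m^2 + 1$ indices, the pigeonhole principle produces two indices $i < j$ with $(u_i, d_i) = (u_j, d_j)$. Comparing $a_i$ and $a_j$ now yields a contradiction either way: if $a_i < a_j$, then appending $a_j$ to any longest increasing subsequence ending at $a_i$ shows $u_j \ge u_i + 1$; if $a_i > a_j$, the symmetric argument shows $d_j \ge d_i + 1$. In either case one coordinate of the pair at $j$ strictly exceeds the corresponding coordinate at $i$, contradicting equality of the pairs.

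The only mild subtlety is repeated values, since the statement allows arbitrary real numbers rather than distinct ones. This is straightforward to handle: one can either interpret \emph{monotonic} in the weak sense and use weakly monotone subsequences in the definitions of $u_i$ and $d_i$, or break ties by index order to reduce to the distinct case. I do not anticipate any real obstacle — the entire substance is the pigeonhole step above, and the count $m^2 + 1$ is precisely what is needed to overflow the $m \times m$ grid of admissible pairs.
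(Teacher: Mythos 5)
Your proof is correct. The double-pigeonhole on the pairs $(u_i,d_i)$ is the classical Seidenberg-style argument, and your handling of ties is sound: defining both $u_i$ and $d_i$ via \emph{weakly} monotone subsequences works, since for any two reals at least one of $a_i\leq a_j$ or $a_i\geq a_j$ holds, so one coordinate of the pair strictly increases from $i$ to $j$ and the pairs are pairwise distinct. (Tie-breaking by index, i.e.\ ordering the pairs $(a_i,i)$ lexicographically, works equally well.)

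The paper takes a slightly different, single-statistic route: it sets $f(i)$ to be the length of the longest non-decreasing subsequence \emph{beginning} at $x_i$, observes that each level set $\{i : f(i)=k\}$ indexes a strictly decreasing subsequence, and then argues by dichotomy --- either some $f(i)>m$ (a long non-decreasing subsequence exists) or $f$ maps $[m^2+1]$ into $[m]$ and one level set has more than $m$ elements (a long decreasing subsequence exists). That argument is essentially a decomposition of the sequence into at most $m$ decreasing chains, in the spirit of Dilworth's theorem, and it produces the monotone subsequence directly rather than by contradiction. Your version pigeonholes over the $m\times m$ grid of pairs and derives a contradiction from a repeated pair. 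The two proofs are of comparable length and both generalise to the $rs+1$ form of the theorem; the paper's has the minor aesthetic advantage of being constructive in flavour, while yours isolates the counting step ($m^2+1 > m\cdot m$) most transparently, which is arguably the cleaner way to see why the bound is what it is.
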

\begin{proof}
Let $x_1,x_2, \ldots ,x_{m^2+1}$ be an arbitrary sequence of real numbers and let $f(i)$ denote the maximum length of all non-decreasing subsequences beginning with the $i^{\mbox{\scriptsize th}}$ number $x_i$. For $i<j$, if $f(i)=f(j)$ then $x_i>x_j$. Therefore for any $k$, the subsequence $(x_i : f(i)=k)$ must be decreasing. Either $f(i)>m$ for some $i$, or $f(i) \leq m$ for all $i$. In the former case, there is a non-decreasing subsequence of length $m+1$ beginning with $x_i$. In the later case $f: [m^2+1] \rightarrow [m]$, so by the pigeon-hole principle there must be some $k \in [m]$ such that $|\{ x_i : f(x_i)=k \}| >m$. In this case, the subsequence $(x_i : f(i)=k)$ is a decreasing subsequence of length at least $m+1$. 
\end{proof}

\begin{defn}
A sequence of points in $\R^d$ is called monotonic if and only if it is monotonic in each coordinate.
\end{defn}

\begin{cor} 
Let $d$ and $m$ be positive integers. Any sequence of $N=m^{2^d}+1$ points in $\R^d$ contains a monotonic subsequence of length $m+1$. 
\label{cor:ES_high_dim}
\end{cor}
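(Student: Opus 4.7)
The plan is to prove Corollary~1.7 by induction on the dimension $d$, with Theorem~\ref{thm:ES} (the one-dimensional Erd\H{o}s--Szekeres theorem) as the base case. The natural matching between the bound $m^{2^d}+1$ and the induction is that $m^{2^d} = (m^{2^{d-1}})^2$, which is exactly the form needed to invoke Erd\H{o}s--Szekeres on one coordinate and then recurse on the remaining $d-1$ coordinates.

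For the base case $d=1$, we have $N = m^2+1$ and the statement reduces directly to Theorem~\ref{thm:ES}. For the inductive step, assume the result for $d-1$ and consider an arbitrary sequence $p_1,p_2,\ldots,p_N$ of $N = m^{2^d}+1$ points in $\R^d$. First I would look at the sequence of first coordinates $(p_i)_1$, which is a sequence of $(m^{2^{d-1}})^2 + 1$ real numbers; by Theorem~\ref{thm:ES} it contains a monotonic subsequence of length $m^{2^{d-1}}+1$. Call the corresponding indices $i_1 < i_2 < \cdots < i_{m^{2^{d-1}}+1}$, so the points $p_{i_1},\ldots,p_{i_{m^{2^{d-1}}+1}}$ form a sub-sequence of the original which is monotonic in its first coordinate.

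Next I would project these points onto their last $d-1$ coordinates, obtaining a sequence of $m^{2^{d-1}}+1$ points in $\R^{d-1}$. The inductive hypothesis supplies a further subsequence of length $m+1$ that is monotonic in each of the remaining $d-1$ coordinates. Because monotonicity in the first coordinate is preserved under taking subsequences, this final subsequence is monotonic in every coordinate, which is the definition of a monotonic sequence in $\R^d$.

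There is not much of an obstacle here; the only thing one has to be careful about is the bookkeeping of the exponent, namely that $(m^{2^{d-1}})^2 + 1 = m^{2^d}+1$ so that Erd\H{o}s--Szekeres delivers exactly enough room for the induction to apply at the next level. The argument also makes transparent why the bound doubles in the exponent per dimension: each coordinate is ``paid for'' by a squaring coming from Theorem~\ref{thm:ES}.
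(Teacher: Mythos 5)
Your proof is correct and follows the same route as the paper: apply the one-dimensional Erd\H{o}s--Szekeres theorem to the first coordinate to extract a subsequence of length $m^{2^{d-1}}+1$ monotonic in that coordinate, then invoke the inductive hypothesis on the remaining $d-1$ coordinates. The exponent bookkeeping $(m^{2^{d-1}})^2 = m^{2^d}$ is exactly the observation the paper uses as well.
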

\begin{proof} (by Induction on $d$) The base case, $d=1$, is exactly the Erd\H{o}s-Szekeres theorem (Theorem~\ref{thm:ES}). For the inductive step, $d \geq 2$, we can use Theorem~\ref{thm:ES} to find a subsequence of length $M$ which is monotonic in the first coordinate, such that  
	$$ M = \sqrt{N-1}+1 = m^{2^{d-1}}+1. $$ 
Now we can apply the inductive assumption to this sequence (the first coordinates can now be guaranteed to be monotonic, so they are ignored) to find the required subsequence of length $m+1$. 
\qedhere
\end{proof}

\begin{lem}
\label{lem:ES_is_tight}
For positive integers $d,m$ there exists a sequence $x_0,x_1 \ldots ,x_{N-1}$, of $N=m^{2^d}$ points in $\R^d$ with no monotonic subsequence of length $m+1$. Moreover, one can construct such a sequence for which the $k^{\mbox{\scriptsize th}}$ coordinates are a rearrangement of $[N]$  for each $k=1,2, \ldots ,d$.
\end{lem}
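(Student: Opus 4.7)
The plan is induction on $d$. For the base case $d = 1$, I use the standard Erd\H{o}s--Szekeres extremal permutation of $[m^2]$: partition $[m^2]$ into $m$ consecutive blocks of length $m$ and list each block in decreasing order, i.e., $x_{(i-1)m + j} = im - j + 1$ for $i,j \in [m]$. Any strictly increasing subsequence must pick at most one entry from each block (each block is internally decreasing), giving length at most $m$; any strictly decreasing subsequence must stay inside a single block (values jump upward between blocks), giving length at most $m$. The single coordinate is manifestly a permutation of $[N] = [m^2]$.

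For the inductive step, let $q_1, \ldots, q_{N'} \in \R^{d-1}$ be the sequence supplied by the inductive hypothesis, where $N' = m^{2^{d-1}}$, every coordinate is a permutation of $[N']$, and no monotonic subsequence has length $m+1$. Set $N = N'^2 = m^{2^d}$, index the new sequence by pairs $(i,j) \in [N'] \times [N']$ ordered lexicographically, and define the coordinates of the $(i,j)$-th point by
\[
x^{(1)}_{i,j} = iN' - j + 1, \qquad x^{(k)}_{i,j} = \bigl(q^{(k-1)}_i - 1\bigr)N' + q^{(k-1)}_j \quad (k = 2, \ldots, d).
\]
The first coordinate is the base-case example at scale $N'$, hence a permutation of $[N]$; each higher coordinate applies the ``lexicographic composition'' $(a,b) \mapsto (a-1)N' + b$ to the pair $\bigl(q^{(k-1)}_i, q^{(k-1)}_j\bigr)$, and since $q^{(k-1)}$ is a permutation of $[N']$ this yields a bijection with $[N]$.

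To verify the monotonicity bound, consider a subsequence indexed by $(i_1,j_1) < \cdots < (i_L,j_L)$ in lex order that is monotonic in every coordinate. Applying the base-case analysis to the first coordinate forces one of two cases: either \textbf{(A)} the first coordinate is increasing, in which case the $i_p$'s are strictly increasing (at most one point per block), or \textbf{(B)} the first coordinate is decreasing, in which case all $i_p$'s are equal (the points lie in a single block). In case (A), the values $q^{(k-1)}_{i_p}$ are pairwise distinct, so the coordinate-$k$ value $\bigl(q^{(k-1)}_{i_p} - 1\bigr)N' + q^{(k-1)}_{j_p}$ is monotonic in $p$ if and only if $q^{(k-1)}_{i_p}$ is. Requiring this for every $k \ge 2$ says $(q_{i_p})_{p=1}^L$ is a monotonic subsequence of the $(d-1)$-dimensional example, so $L \le m$ by the inductive hypothesis. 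Case (B) is analogous: the leading factor $q^{(k-1)}_i$ is a constant and the same argument on $(q_{j_p})_{p=1}^L$ gives $L \le m$.

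I expect the main technical point to be the lexicographic observation in case (A): once the first components of the lex-ordered pairs are distinct, lex-monotonicity collapses to monotonicity of the first components, which is precisely what lets the inductive hypothesis on the $(d-1)$-dimensional example conclude the argument. Everything else --- the block decomposition of the 1D tight example, the bijectivity of each coordinate, and the base case itself --- is routine bookkeeping.
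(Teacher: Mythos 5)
Your proof is correct and follows essentially the same route as the paper's: the identical blocks-of-decreasing-runs permutation for $d=1$, and the same inductive step in which one coordinate carries the base-case block structure at scale $m^{2^{d-1}}$ while the remaining coordinates are the lexicographic composition $(a,b)\mapsto (a-1)N'+b$ of the $(d-1)$-dimensional example with itself. The only differences are cosmetic (you place the block coordinate first rather than last and use $1$-based rather than $0$-based indexing).
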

In the following proof, we construct a suitable sequence in which the $k^{\mbox{\scriptsize th}}$ coordinates are a rearrangement of $\{ 0,1,2, \ldots , N-1 \}$, not the conventional $[N] = \{ 1,2,3, \ldots ,N \}$. This can be overcome by simply adding one to each coordinate. 
\begin{proof} When $m=1$ the result is trivial, so we only consider when $m \geq 2$. We construct a suitable sequence by induction on $d$. 
\begin{description}
\item[Base case] For $d=1$ we have $N=m^2$. For all $a,b$ in the range $\{ 0,1,2, \ldots ,m-1 \}$, let 
	$$ x_{am+b} = am+(m-b-1). $$ 
Now let $\left( x_{k_i} \right)_{i=1}^{m+1}$ be an arbitrary subsequence of $(x_k)_{k=0}^{N-1}$ with length $m+1$. 
\begin{itemize}
\item By the pigeon-hole principle, there must be two terms $x_{k_i},x_{k_{i^\prime}}$ (wlog $k_i<k_{i^\prime}$) such that $k_i$ and $k_{i^\prime}$ have equal remainders upon division by $m$. We have $x_{k_i}-x_{k_{i^\prime}} = k_i-k_{i^\prime}<0$ and so the subsequence cannot be decreasing.
\item Similarly by the pigeon-hole principle, there are two terms $x_{k_j},x_{k_{j^\prime}}$ (wlog $k_j<k_{j^\prime}$) such that $\left\lfloor k_j/m \right\rfloor = \left\lfloor k_{j^\prime}/m \right\rfloor$. We have $x_{k_j}-x_{k_{j^\prime}} = k_{j^\prime}-k_j>0$ and so the subsequence cannot be increasing.
\end{itemize}
Therefore all monotonic subsequences have length at most $m$.
\item[Inductive step:] Let $M = \sqrt{N} = m^{2^{d-1}}$. By the inductive assumption, let $(y_i)_{i=0}^{M-1}$ be a sequence of points in $\R^{d-1}$ with no monotonic subsequence of length $m+1$ such that the $k^{\mbox{\scriptsize th}}$ coordinates are $\{ 0,1, \ldots ,M-1 \}$ in some order for each $k=1, \ldots ,d-1$. Let $y_i^k$ be the $k^{\mbox{\scriptsize th}}$ coordinate of $y_i$. We construct a sequence $(x_i)_{i=0}^{N-1}$ where the $k^{\mbox{\scriptsize th}}$ coordinate of $x_i$ is denoted $x_i^k$. For all $a,b \in \{ 0,1,2, \ldots ,M-1 \}$ define  
	$$ x_{aM+b}^k = y_a^kM + y_b^k, $$
for $k=1,2, \ldots ,d-1$. Also define $x_{aM+b}^d = aM + (M-b-1)$. Now partition the sequence into $M$ equal parts (each part has size $M$) in the following way. 
	$$ P_j = \left( x_{jM+i} \right)_{i=0}^{M-1} $$
for each $j=0,1,2, \ldots ,M-1$.  Now we show by contradiction that the sequence $(x_i)_{i=0}^{N-1}$ contains no monotonic subsequence of length $m+1$:
\begin{itemize} 
\item 
Any subsequence which is increasing in its $d^{\mbox{\scriptsize th}}$ coordinate must have at most one term in each $P_j$. However, this could not have length $m+1$ and be monotonic in the other coordinates by the inductive assumption. 
\item 
Any subsequence which is decreasing in its $d^{\mbox{\scriptsize th}}$ coordinate must be contained within a single $P_j$. Such a subsequence could not have length $m+1$ and be monotonic in the other coordinates by the inductive assumption. 
\end{itemize}
\end{description}
\end{proof}

We now have all the tools required to prove Lemmas~\ref{lem:nbet_lower_bound} and~\ref{lem:nbet_upper_bound}. Together, these lemma imply Theorem~\ref{thm:nbet}. 

\begin{lem}
\label{lem:nbet_lower_bound}
$\nbet(n) \geq \log_2\log_2n+1$.
\end{lem}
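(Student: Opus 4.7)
The plan is to show the contrapositive: if $|\Phi| = k$ is small, then some ternary constraint is not nonbetween-satisfied. The key move is to encode each element $x \in [n]$ as a point $p_x = (\phi_1(x), \phi_2(x), \ldots, \phi_k(x)) \in \R^k$, and observe that $\Phi$ fails to nonbetween-satisfy $(x_1, x_2, x_3)$ precisely when, in every coordinate, $\phi_i(x_2)$ lies strictly between $\phi_i(x_1)$ and $\phi_i(x_3)$. In particular, if the sequence of points $(p_1, p_2, \ldots, p_n)$ admits indices $i_1 < i_2 < i_3$ for which $(p_{i_1}, p_{i_2}, p_{i_3})$ is monotonic in every coordinate, then the middle point $p_{i_2}$ is between $p_{i_1}$ and $p_{i_3}$ in every coordinate, and the constraint $(i_1, i_2, i_3)$ witnesses $\Phi \notin \NBET(n)$.

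To get the tight bound, I would first invoke Proposition~\ref{prop:wlog} to assume without loss of generality that $\phi_1$ is the identity on $[n]$. Under this normalization, the first coordinate of $p_x$ is simply $x$, so for any indices $i_1 < i_2 < i_3$ the first coordinate of $(p_{i_1}, p_{i_2}, p_{i_3})$ is automatically increasing. Consequently, the search for a monotonic subsequence of length $3$ in $(p_x)_{x=1}^n \subset \R^k$ reduces to the search for a monotonic subsequence of length $3$ in the $(k-1)$-dimensional sequence $q_x = (\phi_2(x), \ldots, \phi_k(x))$. This dimension reduction is the source of the ``$+1$'' in the statement.

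Now I would apply Corollary~\ref{cor:ES_high_dim} with $m = 2$ and $d = k-1$: any sequence of $2^{2^{k-1}} + 1$ points in $\R^{k-1}$ contains a monotonic subsequence of length $3$. Thus, if $\Phi \in \NBET(n)$ with $|\Phi| = k$, the sequence $(q_x)_{x=1}^n$ must have no such subsequence, forcing $n \leq 2^{2^{k-1}}$. Taking $\log_2 \log_2$ of both sides yields $k - 1 \geq \log_2\log_2 n$, i.e., $\nbet(n) \geq \log_2\log_2 n + 1$.

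The bulk of the work has already been absorbed into Corollary~\ref{cor:ES_high_dim}, so there is no real obstacle here; the only subtle step is realising that fixing one ordering as the identity is the right bookkeeping move, since it turns the question into a $(k-1)$-dimensional Erd\H{o}s--Szekeres problem and gives exactly the additive constant $+1$ in the lower bound.
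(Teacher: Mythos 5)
Your proof is correct and follows essentially the same route as the paper: normalise one ordering to the identity via Proposition~\ref{prop:wlog}, encode the remaining $k-1$ orderings as a point sequence in $\R^{k-1}$, observe that a monotonic subsequence of length $3$ would yield a constraint between-satisfied by every ordering, and apply Corollary~\ref{cor:ES_high_dim} with $m=2$ to force $n \leq 2^{2^{k-1}}$. The paper's proof is identical up to writing $|\Phi| = d+1$ instead of $k$.
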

\begin{proof}
Let $\Phi = \{ \phi_0,\phi_1,\phi_2, \ldots ,\phi_d \} \in \NBET(n)$ be an order-system of size $\nbet(n) = d+1$. Without loss of generality, let $\phi_0$ be the identity ordering (if not then we can let $\sigma = \phi_0^{-1}$ in Proposition~\ref{prop:wlog} and consider $\Phi\sigma \in \NBET(n)$). Now consider the sequence $(\mathbf{p}_i)_{i=1}^n$ of points in $\R^d$ defined by 
	$$ \mathbf{p}_{i} = (\phi_1(i),\phi_2(i), \ldots ,\phi_d(i)). $$
If this sequence contained a monotonic subsequence of length $3$, say $(\mathbf{p}_a,\mathbf{p}_b,\mathbf{p}_c)$, then $\phi_j$ would between-satisfy the constraint $(a,b,c)$ for all $j$, but this would contradict $\Phi \in \NBET(n)$. Therefore the sequence $(\mathbf{p}_i)_{i=1}^n$ contains no monotonic subsequence of length $3$. By Corollary~\ref{cor:ES_high_dim} (with $m=2$) this means $n \leq 2^{2^{d}}$. Hence
	$$ \nbet(n) = d+1 \geq \log_2\log_2n+1. $$ 
\end{proof}

\begin{lem}
\label{lem:nbet_upper_bound}
$\nbet \left( 2^{2^d} \right) = d+1$.
\end{lem}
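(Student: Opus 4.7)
The plan is to sandwich $\nbet(2^{2^d})$ between the lower bound in Lemma~\ref{lem:nbet_lower_bound} and a matching upper-bound construction based on Lemma~\ref{lem:ES_is_tight}. The lower bound is immediate: since $\log_2\log_2(2^{2^d}) = d$, Lemma~\ref{lem:nbet_lower_bound} gives $\nbet(2^{2^d}) \geq d+1$, so all the work lies in exhibiting an order-system of size $d+1$.

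For the upper bound, I would invoke Lemma~\ref{lem:ES_is_tight} with $m=2$ to obtain a sequence $(\mathbf{p}_i)_{i=1}^{N}$ of $N = 2^{2^d}$ points in $\R^d$ containing no monotonic subsequence of length $3$, and in which the $k^{\text{th}}$ coordinates $\mathbf{p}_i^k$ form a rearrangement of $[N]$ for each $k$. From this sequence I would manufacture an order-system $\Phi = \{\phi_0,\phi_1,\ldots,\phi_d\}$ by taking $\phi_0$ to be the identity ordering and setting $\phi_k(i) = \mathbf{p}_i^k$ for $k=1,\ldots,d$; each $\phi_k$ is a well-defined ordering on $[N]$ precisely because its coordinate values exhaust $[N]$. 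This gives exactly $d+1$ orderings, so it suffices to verify $\Phi \in \NBET(2^{2^d})$.

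The verification reverses the argument of Lemma~\ref{lem:nbet_lower_bound}. Suppose for contradiction that some constraint $(a,b,c)$ is between-satisfied by every $\phi_j$. Between-satisfaction by $\phi_0$ forces the integers to satisfy $a<b<c$ or $c<b<a$; since between-satisfaction is symmetric in the first and third components of a constraint, I may assume $a<b<c$. Between-satisfaction by each $\phi_k$ with $k \geq 1$ then says that $\mathbf{p}_b^k$ lies strictly between $\mathbf{p}_a^k$ and $\mathbf{p}_c^k$, which is precisely the statement that $(\mathbf{p}_a,\mathbf{p}_b,\mathbf{p}_c)$ is monotonic in coordinate $k$. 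Collected over all $k$, this produces a monotonic length-$3$ subsequence of $(\mathbf{p}_i)$, contradicting the defining property of the sequence.

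The only mildly subtle point is recognising the dictionary ``$\phi$ between-satisfies $(a,b,c)$'' $\Longleftrightarrow$ ``the triple $\bigl(\phi(a),\phi(b),\phi(c)\bigr)$ is monotonic'', which is exactly the translation that converts the $\NBET$ condition into avoidance of length-$3$ monotonic subsequences. Once that dictionary is in hand, the $m=2$ instance of Lemma~\ref{lem:ES_is_tight} matches the problem on the nose and there is no further calculation to perform; this is also why $\nbet(n)$ comes out as an iterated logarithm rather than a single logarithm.
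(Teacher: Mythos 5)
Your proposal is correct and follows essentially the same route as the paper: invoke Lemma~\ref{lem:nbet_lower_bound} for the lower bound, apply Lemma~\ref{lem:ES_is_tight} with $m=2$ to build the orderings from the coordinates of a sequence with no monotonic length-$3$ subsequence, and use the identity ordering to reduce to the case $a<b<c$. The only cosmetic difference is that you verify membership in $\NBET$ by contradiction where the paper argues directly constraint-by-constraint; the underlying dictionary between between-satisfaction and coordinatewise monotonicity is identical.
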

\begin{proof}
Let $n=2^{2^n}$. Because of Lemma~\ref{lem:nbet_lower_bound}, it suffices to construct an order-system $\Phi \in \NBET(n)$ with $|\Phi|=d+1$. Using Lemma~\ref{lem:ES_is_tight} (with $m=2$), let $(\mathbf{p}_i)_{i=1}^n$ be a sequence of $n$ points in $\R^d$ containing no monotonic subsequence of length $3$, such that for each coordinate $k$, the $k^{\mbox{\scriptsize th}}$ coordinates are an ordering of $[n]$. Let $\phi_0$ be the identity ordering and for each $k=1,2, \ldots ,d$, let $\phi_k$ be the ordering given by the $k^{\mbox{\scriptsize th}}$ coordinate of this sequence. It suffices to show 
	$$ \Phi = \{ \phi_i : i=0,1,2, \ldots ,d\} \in \NBET(n). $$
Let $\x = (x_1,x_2,x_3)$ be an arbitrary constraint. If $x_2$ was not between $x_1$ and $x_3$ then $\phi_0$ would nonbetween-satisfy $\x$. So we assume $x_2$ is between $x_1$ and $x_3$, and without loss of generality let $x_1<x_2<x_3$. Since $\big(\mathbf{p}(x_1),\mathbf{p}(x_2),\mathbf{p}(x_3)\big)$ is not monotonic in $\R^d$; there must be some coordinate $k$ such that $\phi_k(x_2)$ is not between $\phi_k(x_1)$ and $\phi_k(x_3)$, and so $\phi_k$ nonbetween-satisfies $\x$. 
\end{proof}

\begin{proof}[Proof of Theorem~\ref{thm:nbet}] The case that $n$ is a double power of $2$ is exactly Lemma~\ref{lem:nbet_upper_bound}. In general, let $d$ be the positive integer such that $2^{2^{d-1}} < n \leq 2^{2^d}$ (i.e. $d = \lceil \log_2 \log_2n \rceil$). So
  \begin{align*}
  d & < \log_2\log_2n+1 \\
  & \leq \nbet(n) 
    & \mbox{(Lemma~\ref{lem:nbet_lower_bound})} \\ 
  & \leq \nbet \left( 2^{2^d} \right) 
    & \mbox{($\nbet$ is non-decreasing)} \\ 
  & = d+1.
    & \mbox{(Lemma~\ref{lem:nbet_upper_bound})} 
  \end{align*}
The value $\nbet(n)$ is an integer strictly greater than $d$ and at most $d+1$. Therefore
	$$ \nbet(n)=d+1=\lceil \log_2 \log_2n \rceil +1. $$
\end{proof}

\section{Betweeness}
\label{sec:bet}

In this section, we prove Theorem~\ref{thm:bet}. The lower bound is established in Lemma~\ref{lem:bet_lower_bound} and the upper bound is established in Lemma~\ref{lem:bet_upper_bound}.

\begin{lem}
\label{lem:bet_lower_bound}
$\bet(n) \geq \log_2(n-1)+1$.
\end{lem}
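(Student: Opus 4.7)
The plan is to use a binary encoding argument. Suppose $\Phi = \{\phi_1, \ldots, \phi_k\} \in \BET(n)$ has size $k = \bet(n)$. I want to show $n-1 \leq 2^{k-1}$, which rearranges to the desired bound. The idea is to fix a distinguished element $b$ and encode every other element $x$ by a $k$-bit string recording which side of $b$ the element $x$ falls on in each ordering. Specifically, define $\sigma : [n] \setminus \{b\} \to \{0,1\}^k$ by setting $\sigma_j(x) = 0$ if $\phi_j(x) < \phi_j(b)$ and $\sigma_j(x) = 1$ otherwise.

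The crucial observation is that if $\sigma(a) = \sigma(c)$ for distinct $a,c \neq b$, then in every $\phi_j$ the elements $a$ and $c$ sit on the same side of $b$, so $b$ is never between $a$ and $c$. This contradicts $\Phi$ between-satisfying the constraint $(a,b,c)$. Hence $\sigma$ is injective, giving the crude bound $n - 1 \leq 2^k$.

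To pick up the extra factor of two, I would choose $b$ cleverly, namely as the element that appears first in $\phi_1$. (By Proposition~\ref{prop:wlog} one may even assume $b=1$ and $\phi_1$ is the identity, but this is not needed.) Then every other element comes after $b$ in $\phi_1$, so $\sigma_1(x) = 1$ for all $x \neq b$. The first coordinate of $\sigma$ is therefore constant, so the image of $\sigma$ lies in the set $\{1\} \times \{0,1\}^{k-1}$ of size $2^{k-1}$. Combined with injectivity this yields $n - 1 \leq 2^{k-1}$, hence $k \geq \log_2(n-1) + 1$.

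There is no real obstacle here; the only subtlety is noticing that choosing $b$ to be extreme in one of the orderings kills a coordinate of the encoding and saves the factor of two. The rest is a direct pigeonhole-style argument from the definition of between-satisfaction.
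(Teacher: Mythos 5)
Your proof is correct and is essentially the paper's argument: the paper also encodes each $x \neq b$ by the bits recording which side of $b$ it lies on, and it saves the factor of two the same way, by taking $b$ to be extreme in one of the orderings (there, $b=n$ placed last in an identity ordering, so that ordering contributes no information and is dropped from the code). The only cosmetic difference is that you keep the constant coordinate while the paper omits it.
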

\begin{proof}
Let $\bet(n) = k+1$, let $\Phi = \{ \phi_0,\phi_1,\phi_2, \ldots ,\phi_k \} \in \BET(n)$ and without loss of generality, let $\phi_0$ be the identity ordering (if not then by Proposition~\ref{prop:wlog} we can let $\sigma = \phi_0^{-1}$ and consider $\Phi\sigma \in \BET(n)$). Note that $\phi_0$ does not between-satisfy $(x,n,y)$ for any $x,y$. Consider the function $\mathbf{f} : [n-1] \rightarrow \{ 0,1 \}^k$ defined by $\mathbf{f}(x) = (I_1,I_2, \ldots ,I_k)$ where $I_i$ is the indicator function of $\phi_i(x)<\phi_i(n)$, for all $1 \leq i \leq k$. 
	$$ I_i = \left\{ \begin{array}{cl}
	1 & : \mbox{ if } \phi_i(x)<\phi_i(n) \\
	0 & : \mbox{ otherwise.} 
	\end{array} \right. $$
For any distinct $x,y \in [n-1]$ we must have $\mathbf{f}(x) \not= \mathbf{f}(y)$, otherwise the constraint $(x,n,y)$ would not be between-satisfied by $\Phi$. Therefore $\mathbf{f}$ is injective and its domain is smaller than or equal to its codomain. So
	$$ n-1 \leq \left| \{ 0,1 \}^k \right| = 2^{\bet(n)-1}. $$
\end{proof}

\begin{lem}
\label{lem:bet_upper_bound}
$\bet(n) \leq 2 \left\lceil \log_2(n) \right\rceil$.
\end{lem}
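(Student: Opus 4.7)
The plan is to construct, explicitly, an order-system $\Phi \in \BET(n)$ of size exactly $2\lceil \log_2 n \rceil$. Set $k = \lceil \log_2 n \rceil$ and identify each element of $[n]$ with its $k$-bit binary representation (e.g.\ represent $j$ by the binary expansion of $j-1$). For each bit-index $i \in \{1,2,\ldots,k\}$ and each direction $\epsilon \in \{+,-\}$, I would define an ordering $\phi_i^\epsilon$ by a two-step rule: first place every element whose $i$-th bit equals $0$ before every element whose $i$-th bit equals $1$; then within each of the two resulting blocks sort by numerical value in ascending order if $\epsilon = +$ and in descending order if $\epsilon = -$. Let $\Phi$ be the collection of all $2k$ such orderings, so that $|\Phi| = 2\lceil \log_2 n \rceil$.

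To verify $\Phi \in \BET(n)$, fix an arbitrary ternary constraint $(x_1,x_2,x_3)$ and choose any bit $i$ on which $x_1$ and $x_3$ disagree; such a bit exists since $x_1 \neq x_3$. Without loss of generality assume $x_1$ has $i$-th bit $0$ and $x_3$ has $i$-th bit $1$, so in every $\phi_i^\epsilon$ the position of $x_3$ is strictly greater than that of $x_1$. If $x_2$ also has $i$-th bit $0$, then in both $\phi_i^+$ and $\phi_i^-$ the positions of $x_1$ and $x_2$ lie strictly below that of $x_3$, and the condition that $x_2$ be between $x_1$ and $x_3$ collapses to the single inequality $\phi_i^\epsilon(x_1) < \phi_i^\epsilon(x_2)$ within the $0$-block. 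Because $x_1$ and $x_2$ share the $i$-th bit and are distinct, this inequality is satisfied in the ascending ordering $\phi_i^+$ iff $x_1 < x_2$ numerically and in the descending ordering $\phi_i^-$ iff $x_1 > x_2$ numerically, so exactly one of $\phi_i^+, \phi_i^-$ between-satisfies the constraint. The symmetric case in which $x_2$ shares bit $i$ with $x_3$ is handled identically within the $1$-block.

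I do not expect a substantive obstacle here beyond executing this case analysis cleanly; the essential trick is that pairing each bit with both an ascending and a descending block-ordering converts the between-satisfaction condition into a single strict inequality between two elements that are distinct but share the distinguishing bit, so at least one of the two signs $\epsilon \in \{+,-\}$ necessarily works. Counting the constructed orderings gives $\bet(n) \leq 2\lceil \log_2 n \rceil$, as required.
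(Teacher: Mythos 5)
Your proof is correct. It has the same skeleton as the paper's: $k=\lceil\log_2 n\rceil$ bipartitions of $[n]$ that together separate every pair of elements, with two orderings spent on each bipartition, giving $2k$ in total. The implementations of the doubling trick differ, though. You keep the block structure fixed (bit-$i$ equal to $0$ before bit-$i$ equal to $1$) and reverse the order \emph{within} each block, so that the single inequality to which between-satisfaction reduces --- comparing $x_2$ against whichever endpoint shares its $i$-th bit --- is flipped between $\phi_i^+$ and $\phi_i^-$ and hence holds for exactly one of them. The paper instead starts from an arbitrary separating family $\{\psi_i\}$ (the bit construction appears only in a footnote), reduces to $n=2^k$ by monotonicity of $\bet$, and pairs each $\psi_i$ with the ordering obtained by swapping its two halves, checking in four cases that if $x_2$ is not between $x_1$ and $x_3$ in $\psi_i$ then it is after the swap. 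Both mechanisms are sound; yours works directly for all $n$ and turns the ``one of the two must work'' step into a one-line parity observation, while the paper's version is agnostic to how the separating orderings arrange elements within each half.
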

\begin{proof}
Since $\bet(n)$ is a non-decreasing integer function, it suffices to consider only the case that $n = 2^k$ for some integer $k \geq 2$. We do this by explicitly constructing an order-system $\Phi \in \BET(2^k)$ with $|\Phi|=2k$. First let $\{ \psi_i \}_{i=1}^k$ be a set of $k$ distinct orderings such that for each $x,y \in [n]$, there is at least one $\psi_i$ such that $\psi_i(x) \leq \frac{n}{2} < \psi_i(y)$ or $\psi_i(y) \leq \frac{n}{2} < \psi_i(x)$.\footnote{
There are many ways to find such set $\{ \psi_i \}_{i=1}^k$. One way is to write the elements of $[n]$ in base two, and then let $\psi_i$ be the $i^{\mbox{\scriptsize th}}$ cyclic shift of the digits.
}
For each $\psi_i$, let $\phi_i$ be an ordering defined by 
	$$ \phi_i(x) = \left\{ \begin{array}{cl}
	\psi_i(x) - n/2 & : \mbox{ if } \psi_i(x)>\frac{n}{2} \\
	\psi_i(x) + n/2 & : \mbox{ otherwise.} \end{array} \right. $$
Now consider an arbitrary ternary constraint $(x,\alpha,y)$, and let $i$ be such that $\psi_i(x) \leq \frac{n}{2} < \psi_i(y)$ or $\psi_i(y) \leq \frac{n}{2} < \psi_i(x)$. If $\psi_i$ does not between-satisfy $(x,\alpha,y)$ then there are four cases:
\begin{itemize}
\item If $\psi(\alpha) < \psi_i(x) \leq \frac{n}{2} < \psi_i(y)$ then $\phi_i(y) \leq n/2 < \phi_i(\alpha) < \phi_i(x)$.
\item If $\psi_i(x) \leq \frac{n}{2} < \psi_i(y) < \psi(\alpha)$ then $\phi_i(y) < \psi(\alpha) \leq n/2 < \phi_i(x)$.
\item If $\psi(\alpha) < \psi_i(y) \leq \frac{n}{2} < \psi_i(x)$ then $\phi_i(x) \leq n/2 < \phi_i(\alpha) < \phi_i(y)$.
\item If $\psi_i(y) \leq \frac{n}{2} < \psi_i(x) < \psi(\alpha)$ then $\phi_i(x) < \psi(\alpha) \leq n/2 < \phi_i(y)$.
\end{itemize}
In any case, if $\psi_i$ does not between-satisfy $(x,\alpha,y)$ then $\phi_i$ does. Therefore 
	$$ \Phi = \bigcup_{i=1}^k \big\{ \phi_i , \psi_i \big\} \in \BET(n). $$
\end{proof}

By considering the first few values $n$ (Figure~\ref{fig:tableOfValues}), it seems that $\bet(n)$ is close to $2\log_2n$. The exact value of $\bet(n)$ for $n \geq 8$ is left as an open question. 

\section{The Extreme Ternary Constraint Problem}
\label{sec:etp}

The extreme ternary constraint problem is a generalisation of both betweeness and nonbetweenness. In this section we define $\ETP[\Pi]$ (Definition~\ref{defn:etp}) and we determine the asymptotics of $p_{\Pi}(n)$ for all proper subsets $\Pi \subset S_3$ (Theorem~\ref{thm:etp}). 

\begin{defn}
\label{defn:etp}
Let $\Pi \subset S_3$ be a non-empty set of permutations.
\begin{itemize}
\item An order-system $\Phi$ on $n$ is said to $\Pi$-solve the extreme ternary constraint problem if for every constraint $\x$, there exists an order $\phi \in \Phi$ such that $\ord(\phi,\x) \in \Pi$.
\item Let $\ETP[\Pi]$ denote the set of all order-systems that $\Pi$-solve the extreme ternary constraint problem.
\item Let $p_{\Pi}(n)$ denote the minimum size of an order-system in $\ETP[\Pi]$.
\end{itemize}
\end{defn}

We only consider non-empty subsets $\Pi$ because $p_\emptyset(n)$ is not defined.

\begin{prop}
\label{prop:etp_wlog}
For $\Pi \subseteq S_3$ and any permutation $\sigma \in S_3$ we have $\ETP[\Pi] = \ETP[\sigma\Pi]$ where $\sigma\Pi = \{ \sigma \circ \pi \: | \: \pi \in \Pi \}$, and so $p_{\Pi}(n) = p_{\sigma\Pi}(n)$.
\end{prop}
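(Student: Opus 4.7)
The plan is to exhibit a bijection on the set of ternary constraints that intertwines $\ord$ with left multiplication by $\sigma$ on $S_3$. For each $\sigma \in S_3$ and each constraint $\x = (x_1, x_2, x_3)$, I define
$$ \x^\sigma = (x_{\sigma^{-1}(1)}, x_{\sigma^{-1}(2)}, x_{\sigma^{-1}(3)}). $$
The map $\x \mapsto \x^\sigma$ is a bijection on the set of all ternary constraints, its inverse being $\x \mapsto \x^{\sigma^{-1}}$, because it just relabels the three coordinate positions.

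The key step is to verify the identity
$$ \ord(\phi, \x^\sigma) = \sigma \circ \ord(\phi, \x) $$
for every ordering $\phi$. I would unpack both sides directly: writing $\pi = \ord(\phi, \x)$, the defining inequality is $\phi(x_{\pi(1)}) < \phi(x_{\pi(2)}) < \phi(x_{\pi(3)})$. Substituting $x_j = (\x^\sigma)_{\sigma(j)}$ (which is just a rewriting of the definition of $\x^\sigma$) turns this into $\phi((\x^\sigma)_{\sigma\pi(1)}) < \phi((\x^\sigma)_{\sigma\pi(2)}) < \phi((\x^\sigma)_{\sigma\pi(3)})$, which by definition of $\ord$ says $\ord(\phi, \x^\sigma) = \sigma \circ \pi$, as required.

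Given these two ingredients, the conclusion is a formal substitution. Suppose $\Phi \in \ETP[\Pi]$ and let $\y$ be an arbitrary constraint; put $\x = \y^{\sigma^{-1}}$ so that $\x^\sigma = \y$. By hypothesis there exists $\phi \in \Phi$ with $\ord(\phi, \x) \in \Pi$, and the identity then yields $\ord(\phi, \y) = \sigma \circ \ord(\phi, \x) \in \sigma\Pi$, establishing $\Phi \in \ETP[\sigma\Pi]$. The reverse inclusion comes from applying the same argument with $\sigma$ replaced by $\sigma^{-1}$ and $\Pi$ replaced by $\sigma\Pi$, using $\sigma^{-1}(\sigma\Pi) = \Pi$. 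The equality $p_\Pi(n) = p_{\sigma\Pi}(n)$ is then immediate since $\ETP[\Pi]$ and $\ETP[\sigma\Pi]$ coincide as sets. The only real obstacle is keeping the composition direction straight (in particular the placement of $\sigma^{-1}$ in the definition of $\x^\sigma$), which a quick sanity check with $\sigma$ a transposition resolves.
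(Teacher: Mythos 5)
Your proof is correct and follows essentially the same route as the paper: permute the coordinate positions of the constraint by $\sigma$, verify the intertwining identity $\ord(\phi,\x^\sigma)=\sigma\circ\ord(\phi,\x)$, and use the resulting bijection on constraints to transfer membership in $\ETP[\Pi]$ to $\ETP[\sigma\Pi]$. Your attention to the composition direction pays off: your identity is stated in the correct orientation, whereas the paper's displayed identity has the roles of $\sigma$ and $\sigma^{-1}$ interchanged (a harmless slip, since the claim is asserted for all $\sigma \in S_3$).
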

\begin{proof}
Since $\Pi = \sigma^{-1}(\sigma\Pi)$, it suffices to show that $\ETP[\Pi] \subseteq \ETP[\sigma\Pi]$. To show this, we will prove that for any $\Phi \in \ETP[\Pi]$, we must have $\Phi \in \ETP[\sigma\Pi]$. For any constraint $\x = (x_1,x_2,x_3)$ there exists a constraint $\x^\prime = (x_{\sigma^{-1}(1)},x_{\sigma^{-1}(2)},x_{\sigma^{-1}(3)})$. By construction, for any ordering $\phi$ we have
	$$ \ord(\phi,\x) = \sigma \circ \ord(\phi,\x^\prime). $$ 
Therefore if $\Phi \in \ETP[\Pi]$ then for any constraint $\x$, there exists some $\phi \in \Phi$ such that $\ord(\phi,\x^\prime) \in \Pi$. For this ordering $\phi$, we must have 
	$$ \ord(\phi,\x) = \sigma \circ \ord(\phi,\x^\prime) \in \sigma\Pi. $$
\end{proof}

\begin{defn}
For $a=1,2,3$ let $M_a$ denote the set of elements of $S_3$ with $a$ in the middle. i.e.
	$$ \begin{array}{c} 
	M_1 = \{ 213, 312 \} \\
	M_2 = \{ 123, 321 \} \\
	M_3 = \{ 132, 231 \} 
	\end{array} $$
\end{defn}

By definition: $\BET = \ETP[M_2]$ and $\NBET = \ETP[M_1 \cup M_3]$. So the extreme ternary constraint problem is a generalisation of betweenness and nonbetweenness. The asymptotics of $p_{(M_1 \cup M_3)}(n)$ and $p_{M_2}(n)$ are $\Theta(\log\log n)$ and $\Theta(\log n)$ respectively by Theorems \ref{thm:nbet} and \ref{thm:bet}. For $\Pi = S_3$, we trivially have $p_\Pi(n) = 1$ for all $n$, and for $\Pi = \emptyset$, the function $p_\Pi$ is not defined. For other sets $\Pi$, the asymptotics of $p_{\Pi}(n)$ are presented in Theorem~\ref{thm:etp}.

\begin{thm}
\label{thm:etp}
Let $\emptyset \not= \Pi \subset S_3$ and let $c$ be the number of distinct $M_a$ such that $\Pi \cap M_a = \emptyset$. 
\begin{itemize}
\item If $c=0$ then $p_{\Pi}(n) = 2$.
\item If $c=1$ then $p_{\Pi}(n) = \Theta(\log\log n)$.
\item If $c=2$ then $p_{\Pi}(n) = \Theta(\log n)$.
\end{itemize}
\end{thm}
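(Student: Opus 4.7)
My plan is to handle the three cases $c=0,1,2$ separately, establishing matching lower and upper bounds in each. The organising device throughout is Proposition~\ref{prop:etp_wlog}: since left multiplication by $\sigma \in S_3$ sends $M_a$ to $M_{\sigma(a)}$, the integer $c$ is invariant under $\Pi \sim \sigma\Pi$, and I may freely normalise which specific $M_a$'s are met or missed by $\Pi$. I will also repeatedly use the trivial monotonicity $\Pi \subseteq \Pi' \Rightarrow p_\Pi(n) \geq p_{\Pi'}(n)$.

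For the lower bounds, normalisation plus monotonicity collapses each case onto the theorems already proved. When $c \geq 1$, I normalise $\Pi \cap M_2 = \emptyset$, so $\ETP[\Pi] \subseteq \NBET$ and Theorem~\ref{thm:nbet} gives $p_\Pi(n) \geq \nbet(n) = \Omega(\log\log n)$. When $c = 2$, I normalise $\Pi \subseteq M_2$, so $\ETP[\Pi] \subseteq \BET$ and Theorem~\ref{thm:bet} gives $p_\Pi(n) \geq \bet(n) = \Omega(\log n)$. For $c = 0$ I argue directly that $p_\Pi(n) \geq 2$: since $\Pi \subsetneq S_3$ there is some $\pi^* \notin \Pi$, and for any single ordering $\phi$, the six reorderings of any fixed triple of distinct elements of $[n]$ into constraint tuples realise all six values of $\ord(\phi, \cdot)$, so some constraint produces $\pi^*$.

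For the upper bounds I use a uniform reversal-doubling construction. Define $\phi^{\mathrm{rev}}(x) = n + 1 - \phi(x)$; then $\ord(\phi^{\mathrm{rev}}, \x)$ is the coordinate reverse of $\ord(\phi, \x)$, and since each $M_a$ consists precisely of a permutation together with its coordinate reverse (explicitly $M_1 = \{213, 312\}$, $M_2 = \{123, 321\}$, $M_3 = \{132, 231\}$), the unordered pair $\{\ord(\phi, \x), \ord(\phi^{\mathrm{rev}}, \x)\}$ exhausts some $M_{a(\phi, \x)}$. From an appropriate seed $\Phi_0$ I form $\Phi = \Phi_0 \cup \{\phi^{\mathrm{rev}} : \phi \in \Phi_0\}$, of size at most $2|\Phi_0|$. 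For $c = 0$, take $\Phi_0$ to be the identity ordering alone: the pair exhausts $M_a$ for the naturally-middle index of $\x$, which $\Pi$ meets, yielding $p_\Pi(n) \leq 2$. For $c = 1$ with $\Pi$ normalised to meet both $M_1$ and $M_3$ (and miss $M_2$), take $\Phi_0 \in \NBET$ of size $O(\log\log n)$ from Theorem~\ref{thm:nbet}; the pair lies entirely in $M_1$ or entirely in $M_3$, both of which $\Pi$ meets. For $c = 2$ with $\Pi$ normalised to a nonempty subset of $M_2$, take $\Phi_0 \in \BET$ of size $O(\log n)$ from Theorem~\ref{thm:bet}; the pair equals $M_2$, which $\Pi$ meets.

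I do not anticipate a serious obstacle: the substantive combinatorial content has been absorbed into Theorems~\ref{thm:nbet} and~\ref{thm:bet}, and what remains is the normalisation via Proposition~\ref{prop:etp_wlog} together with the elementary observation that each $M_a$ is closed under coordinate reversal. The one point to verify carefully is that the reversal-doubling construction truly intersects $\Pi$ in each normalised case, which follows at once from the identity $\{p, p^{\mathrm{rev}}\} = M_a$ for $p \in M_a$ combined with the hypotheses on which $M_a$'s $\Pi$ meets.
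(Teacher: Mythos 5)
Your proposal is correct and follows essentially the same route as the paper: normalising $\Pi$ via Proposition~\ref{prop:etp_wlog} so that the missed $M_a$'s are in standard position, reducing the lower bounds to $\nbet(n)$ and $\bet(n)$ via the containments $\ETP[\Pi] \subseteq \NBET$ or $\BET$, and obtaining the upper bounds by doubling a seed order-system with reversals, using the fact that each $M_a$ is a reversal pair. The only cosmetic differences are that you package all three upper bounds into one uniform reversal-doubling device and spell out explicitly why a single ordering cannot suffice when $c=0$.
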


The three parts of Theorem~\ref{thm:etp} are proven separately in Lemmas~\ref{lem:c=0}. \ref{lem:c=1} and \ref{lem:c=2}. 

\begin{lem}
\label{lem:c=0}
If $\Pi \not= S_3$ and $\Pi$ intersects each of $M_1$, $M_2$ and $M_3$, then $p_{\Pi}(n) = 2$.
\end{lem}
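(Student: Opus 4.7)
The plan is to establish the two bounds $p_\Pi(n) \geq 2$ and $p_\Pi(n) \leq 2$ separately. For both, $n \geq 3$ is assumed since constraints require three distinct elements.

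For the lower bound, I would exploit the hypothesis $\Pi \neq S_3$ directly: choose any $\tau \in S_3 \setminus \Pi$. Given a single ordering $\phi$, pick three distinct elements $\alpha,\beta,\gamma \in [n]$ with $\phi(\alpha) < \phi(\beta) < \phi(\gamma)$, and define the constraint $\x = (x_1,x_2,x_3)$ by setting $x_{\tau(1)} = \alpha$, $x_{\tau(2)} = \beta$, $x_{\tau(3)} = \gamma$. By construction $\ord(\phi,\x) = \tau \notin \Pi$, so no singleton order-system can $\Pi$-solve every constraint.

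For the upper bound, my plan is to exhibit an explicit two-element order-system $\Phi = \{\phi_1,\phi_2\}$ where $\phi_1$ is the identity and $\phi_2$ is the reverse, $\phi_2(x) = n+1-x$. The key observation is that reversing the ordering reverses every comparison, so for every constraint $\x$, the permutation $\ord(\phi_2,\x)$ is the reverse of the permutation $\ord(\phi_1,\x)$. This should be combined with the small combinatorial fact that $M_1 = \{213,312\}$, $M_2 = \{123,321\}$ and $M_3 = \{132,231\}$ are exactly the three pairs of mutually-reverse permutations, and they partition $S_3$. Consequently, for every $\x$ the pair $\{\ord(\phi_1,\x),\ord(\phi_2,\x)\}$ coincides with some $M_a$, and the hypothesis $\Pi \cap M_a \neq \emptyset$ for each $a \in \{1,2,3\}$ forces at least one of $\phi_1,\phi_2$ to give a relative order inside $\Pi$. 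Hence $\Phi \in \ETP[\Pi]$.

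No real obstacle should arise; the whole argument reduces to recognising that the sets $M_a$ are precisely the orbits of the reverse-involution acting on $S_3$, after which taking the identity and its reverse is the only choice that needs to be made.
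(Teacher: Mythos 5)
Your proposal is correct and follows essentially the same route as the paper: the lower bound comes from realising any forbidden permutation as a relative order under a single ordering, and the upper bound uses an ordering together with its reverse, relying on the fact that each $M_a$ is a reverse-pair meeting $\Pi$. Your lower bound is slightly more explicit than the paper's (you construct the offending constraint), but the argument is the same.
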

\begin{proof}
Since $S_3 \backslash \Pi \not= \emptyset$, for each ordering $\phi$ there will be some constraint $\x$ such that $\ord(\phi,\x) \in S_3 \backslash \Pi$. Therefore $\{ \phi \} \not\in \ETP[\Pi]$ for any ordering $\phi$. So $p_\Pi(n) \not= 1$ and thus $p_\Pi(n) \geq 2$. However for any order, $\phi$, suppose $\ord(\phi,\x) = \sigma \notin \Pi$ for some constraint $\x$. There must be some $a \in \{ 1,2,3 \}$ such that $\sigma \in M_a$. So let $M_a = \{ \sigma , \tau \}$ and let $\phi^R$ be the reverse of $\phi$. Since $\Pi \cap M_a \not= \emptyset$ we must have $\tau \in \Pi$ and (since $\tau$ is the reverse of $\sigma$) 
	$$ \ord(\phi^R,\x) = \tau \in \Pi. $$ 
Thus for any ordering $\phi$ and any constraint $\x$, either $\ord(\phi,\x) \in \Pi$ or $\ord(\phi^R,\x) \in \Pi$. For any $\phi$, we have $\{ \phi,\phi^R \} \in \ETP[\Pi]$. Hence $p_{\Pi}(n) = 2$.
\end{proof}

\begin{lem}
\label{lem:c=1}
If $\Pi$ intersects exactly $2$ of $\{ M_1, M_2, M_3 \}$, then for all $n \geq 3$
	$$ \left\lceil \log_2\log_2 n \right\rceil + 1 \leq p_{\Pi}(n) 
	\leq 2\left\lceil \log_2\log_2 n \right\rceil + 2. $$ 
\end{lem}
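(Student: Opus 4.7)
The plan is to normalise $\Pi$ via Proposition~\ref{prop:etp_wlog}, extract the lower bound directly from Theorem~\ref{thm:nbet}, and obtain the upper bound by augmenting an optimal nonbetween-satisfying order-system with its reverses. Since $c=1$, exactly one $M_a$ is disjoint from $\Pi$, and I will reduce to the case where this $a$ equals $2$, so that $\Pi \subseteq M_1 \cup M_3$ with $\Pi \cap M_1 \neq \emptyset$ and $\Pi \cap M_3 \neq \emptyset$.

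This normalisation is legitimate because left multiplication by any $\sigma \in S_3$ permutes $\{M_1,M_2,M_3\}$: for $\pi \in M_a$ we have $(\sigma \circ \pi)(2) = \sigma(\pi(2)) = \sigma(a)$, so $\sigma M_a = M_{\sigma(a)}$. Selecting $\sigma$ with $\sigma(a)=2$ therefore ensures $\sigma\Pi \cap M_2 = \emptyset$, and Proposition~\ref{prop:etp_wlog} gives $p_\Pi(n) = p_{\sigma\Pi}(n)$. With $\Pi \subseteq M_1 \cup M_3$, the inclusion $\ETP[\Pi] \subseteq \ETP[M_1 \cup M_3] = \NBET(n)$ is immediate, yielding $p_\Pi(n) \geq \nbet(n) = \lceil\log_2\log_2 n\rceil+1$ by Theorem~\ref{thm:nbet}.

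For the upper bound, let $\Phi \in \NBET(n)$ have minimal size $\nbet(n)$ and define $\Phi' = \Phi \cup \{\phi^R : \phi \in \Phi\}$, where $\phi^R$ is the reverse of $\phi$; then $|\Phi'| \leq 2\lceil\log_2\log_2 n\rceil + 2$. Given a constraint $\x$, pick $\phi \in \Phi$ with $\ord(\phi,\x) \in M_1 \cup M_3$, say $\ord(\phi,\x) \in M_a$ with $a \in \{1,3\}$. Reversing $\phi$ preserves the middle entry of $\ord(\phi,\x)$ while swapping the two outer entries, so $\ord(\phi^R,\x)$ is the other element of $M_a$; that is, $\{\ord(\phi,\x),\ord(\phi^R,\x)\} = M_a$. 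Since $\Pi \cap M_a \neq \emptyset$, at least one of these two orderings lies in $\Pi$, confirming $\Phi' \in \ETP[\Pi]$.

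The only delicate point, already dealt with above, is verifying that $\sigma M_a = M_{\sigma(a)}$ so that the WLOG reduction is justified; this is immediate from the description $M_a = \{\pi \in S_3 : \pi(2)=a\}$, and no other step in the argument presents a substantive obstacle.
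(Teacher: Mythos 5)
Your proposal is correct and follows essentially the same route as the paper: normalise via Proposition~\ref{prop:etp_wlog} so that $\Pi \subseteq M_1 \cup M_3$, deduce the lower bound from $\ETP[\Pi] \subseteq \NBET$ and Theorem~\ref{thm:nbet}, and obtain the upper bound by adjoining reverses to a minimal order-system in $\NBET(n)$. Your explicit verification that $\sigma M_a = M_{\sigma(a)}$, and your observation that the normalised $\Pi$ meets both $M_1$ and $M_3$ (which the reversal argument needs), are slightly more careful than the paper's wording but change nothing of substance.
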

\begin{proof}
Let $a$ be the index such that $\Pi \cap M_a = \emptyset$. If $a \not= 2$ then we can apply Proposition~\ref{prop:etp_wlog} (with either $\sigma = 231$ or $\sigma = 312$) so that $\sigma\Pi$ intersects $M_1$ and $M_3$ but not $M_2$. So without loss of generality let us assume $\Pi \subseteq M_1 \cup M_3$. This implies $\ETP[\Pi] \subseteq \ETP[M_1 \cup M_3] = \NBET$ and therefore 
	$$ p_{\Pi}(n) \geq \nbet(n) = \left\lceil \log_2\log_2 n \right\rceil + 1. $$
To show the upper bound, consider some $\Phi \in \NBET(n)$ with $|\Phi| = \nbet(n) = \left\lceil \log_2\log_2n \right\rceil +1$. Let $\x = (x_1,x_2,x_3)$ be an arbitrary constraint. There must be some $\phi \in \Phi$ such that $x_2$ is not between $x_1$ and $x_3$ in $\phi$. Let $\ord(\phi,\x) = \sigma \in M_1 \cup M_3$.
\begin{itemize}
\item If $\sigma \in \Pi$ then $\phi$ $\Pi$-satisfies $\x$. 
\item If for $b =1$ or $3$, we have $\sigma \in M_b \backslash \Pi$ then $M_b = \{ \sigma , \tau \}$ where $\tau \in \Pi$ and $\tau$ is the reverse of $\sigma$. In this case $\ord(\phi^R,\x) = \tau \in \Pi$, where $\phi^R$ is the reverse of $\phi$.
\end{itemize}
So either $\ord(\phi,\x) \in \Pi$ or $\ord(\phi^R,\x) \in \Pi$. Hence $\Psi = \{ \phi , \phi^R | \phi \in \Phi \} \in \ETP[\Pi]$. Therefore
	$$ p_{\Pi}(n) \leq |\Psi| = 2\left\lceil \log_2\log_2 n \right\rceil + 2. $$
\end{proof}

\begin{lem}
\label{lem:c=2}
If $\Pi$ is non-empty and intersects exactly $1$ of $\{ M_1, M_2, M_3 \}$ then for all $n \geq 3$
	$$ \log_2 (n-1)+1 \leq p_{\Pi}(n) \leq 4 \left\lceil \log_2 n \right\rceil.$$ 
\end{lem}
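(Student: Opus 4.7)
The plan is to reduce both bounds to the corresponding estimates for $\bet(n)$ established in Section~\ref{sec:bet}. Since $\Pi$ is non-empty and meets exactly one of $M_1, M_2, M_3$, we have $\emptyset \neq \Pi \subseteq M_a$ for a unique $a \in \{1,2,3\}$.

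The key identity driving both bounds is
$$ \ETP[M_1] \;=\; \ETP[M_2] \;=\; \ETP[M_3] \;=\; \BET $$
as collections of order-systems. I would prove this directly from the definitions: $\BET$ demands that for \emph{every} ordered triple $(x_1,x_2,x_3)$ of distinct elements some $\phi$ places $x_2$ between the other two; since the family of all ternary constraints is closed under permuting coordinates, this is logically equivalent to demanding the same property with $x_a$ replacing $x_2$ for any fixed $a$. In particular, a minimum $\ETP[M_a]$-solution has size exactly $\bet(n)$. The lower bound is then immediate: $\ETP[\Pi] \subseteq \ETP[M_a] = \BET$, so $p_\Pi(n) \geq \bet(n) \geq \log_2(n-1)+1$ by Lemma~\ref{lem:bet_lower_bound}.

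For the upper bound I would start with an optimal $\BET$ order-system $\Phi$, of size $\bet(n) \leq 2\lceil \log_2 n \rceil$ (Lemma~\ref{lem:bet_upper_bound}), and double it by adjoining reverses. Define $\phi^R$ by $\phi^R(x) = n+1-\phi(x)$, so that $\ord(\phi^R,\x)$ is the coordinate-reverse of $\ord(\phi,\x)$, and set $\Psi = \Phi \cup \{\phi^R : \phi \in \Phi\}$. Each $M_a$ consists of exactly two permutations that are reverses of one another, so whenever some $\phi \in \Phi$ witnesses $\ord(\phi,\x) \in M_a$ (guaranteed by $\Phi \in \ETP[M_a]$), the pair $\{\phi,\phi^R\}$ realises both elements of $M_a$ on $\x$, and hence at least one element of the non-empty set $\Pi$. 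Therefore $\Psi \in \ETP[\Pi]$ and $p_\Pi(n) \leq |\Psi| \leq 2\bet(n) \leq 4\lceil \log_2 n \rceil$.

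The main obstacle I anticipate is establishing the identity $\ETP[M_a] = \BET$ cleanly: it cannot be obtained from Proposition~\ref{prop:etp_wlog} alone, because left multiplication in $S_3$ does not send $M_1$ (or $M_3$) onto $M_2$ as a set. The argument has to exploit the symmetry of the constraint family itself, by permuting the roles of the three coordinates of an arbitrary constraint rather than acting on $\Pi$.
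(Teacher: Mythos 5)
Your proof is correct and follows essentially the same route as the paper: reduce both bounds to $\bet(n)$ via the symmetry among $M_1,M_2,M_3$, then double an optimal $\BET$-system by adjoining reverses, using that each $M_a$ is a reverse-pair so $\{\phi,\phi^R\}$ realises all of $M_a$ and hence hits the nonempty $\Pi\subseteq M_a$. One correction to your closing remark: Proposition~\ref{prop:etp_wlog} \emph{does} suffice, because the middle letter of $\sigma\circ\pi$ is $\sigma$ applied to the middle letter of $\pi$, so left multiplication by $\sigma=231$ sends $M_1$ onto $M_2$ (indeed $231\circ 213=321$ and $231\circ 312=123$), and this is exactly how the paper reduces to the case $\Pi\subseteq M_2$. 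Your direct argument that $\ETP[M_a]=\BET$ by permuting the coordinates of the constraints is equally valid --- it is in substance the proof of that proposition --- so nothing is lost either way.
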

\begin{proof}
Let $a$ be the index such that $\Pi \subseteq M_a$. If $a \not= 2$ then we can apply Proposition~\ref{prop:etp_wlog} (with either $\sigma = 231$ or $\sigma = 312$) so that $\sigma\Pi \subseteq M_2$. So without loss of generality let us assume $\Pi \cap M_1$ and $\Pi \cap M_3$ are empty, and so $\Pi \subseteq M_2$. Therefore $\ETP[\Pi] \subseteq \ETP[M_2] = \BET$ and therefore 
	$$ p_{\Pi}(n) \geq \bet(n) \geq \log_2(n-1)+1. $$ 
To show the upper bound, let $\Phi \in \BET(n)$ be chosen arbitrarily with $|\Phi| = \bet(n) \leq 2\left\lceil \log_2n \right\rceil$ (Lemma~\ref{lem:bet_upper_bound}). Let $\x = (x_1,x_2,x_3)$ be an arbitrary constraint. Since $\Phi$ solves $\BET$, there must be some $\phi \in \Phi$ such that $x_2$ is between $x_1$ and $x_3$ in $\phi$. Without loss of generality $M_2 = \{ \sigma , \tau \}$ and $\ord(\phi,\x) = \sigma$. 
If $\sigma \not\in \Pi$ then $\Pi = \{ \tau \}$ and $\ord(\phi^R,\x) = \tau \in \Pi$ where $\phi^R$ is the reverse of $\phi$. Either way $\ord(\phi,\x)$ or $\ord(\phi^R,\x)$ is in $\Pi$. Hence
	$$ \Psi = \{ \phi , \phi^R | \phi \in \Phi \} \in \ETP[\Pi] $$
and therefore $p_{\Pi}(n) \leq |\Psi| \leq 4 \left\lceil \log_2 n \right\rceil$.
\end{proof}

\section{An application to Phylogenetics}
\label{sec:phylo}

A phylogenetic tree is a rooted binary tree with leaves labelled by $[n]$. A phylogenetic triplet \cite{Semple2003phylogenetics} denoted $(a|b,c)$ is any triple of three distinct integers $a,b,c \in [n]$. A phylogenetic tree is said to be \emph{consistent} with the phylogenetic triplet, $(a|b,c)$, if the path from the leaf labelled $a$ to the root does not intersect the path between the leaves labelled $b$ and $c$. Phylogeneticists sometimes search for a tree or a network which is consistent with a given set of triplets \cite{Boekhout2009constructing,Critchlow1996triples,Habib2009level}. Certain sets of phylogenetic trees can be represented as a network with hybridization vertices \cite{Gusfield2014recombinatorics}. We consider the following question: what is the minimal size of a set of phylogenetic trees such that any phylogenetic triplet $(a|bc)$ is consistent with at least one tree in the set? This will be denoted $p(n)$ (see Definition~\ref{defn:ept}) and the asymptotic value is computed in Theorem~\ref{thm:ept}.

\begin{defn}
\label{defn:ept}
Let $\T$ be a set of rooted phylogenetic trees with $n$ leaves labelled by $[n]$. We say $\T \in \EPT$ if and only if for every triplet $(a|bc)$, at least one tree $T \in \T$ is consistent with $(a|bc)$. Let $p(n)$ be the minimal size of all sets of phylogenetic trees in $\EPT = \EPT(n)$. 
\end{defn}

\begin{defn}
For any ordering $\phi$ on $[n]$, let $\cat(\phi)$ be the rooted caterpillar with leaves labelled by $[n]$ in the order given by $\phi$ (i.e. the leaf which is the $i^{\mbox{\scriptsize th}}$ closest to the root is labelled $\phi^{-1}(i)$). For example if $n=4$ and $\phi = (3,1,2,4)$ then 
\begin{center}
\begin{tikzpicture}[scale=0.3]
	\draw[thick] (0,0) -- (3,6) -- (3,7);
	\draw[thick] (6,0) -- (3,6);
	\draw[thick] (2,0) -- (4,4);
	\draw[thick] (4,0) -- (5,2);
	\draw[thick,draw=black,fill=red] (0,0) circle (0.4) ;
	\draw[thick,draw=black,fill=red] (2,0) circle (0.4) ;
	\draw[thick,draw=black,fill=red] (4,0) circle (0.4) ;
	\draw[thick,draw=black,fill=red] (6,0) circle (0.4) ;
	\node[below] at (0,-0.4) {$3$};
	\node[below] at (2,-0.4) {$1$};
	\node[below] at (4,-0.4) {$2$};
	\node[below] at (6,-0.4) {$4$};
	\node at (-1,3) {$E = $};
\end{tikzpicture}
\end{center}
\end{defn}

\begin{prop}
\label{prop:ordering_to_cat}
Let $\Pi = \{ 123,132 \} \subseteq S_3$, let $\phi$ be any ordering and let $\x = (x_1,x_2,x_3)$ be any ternary constraint. We have $\ord(\phi,\x) \in \Pi$ if and only if $\cat(\phi)$ is consistent with the triplet $(x_1|x_2,x_3)$.
\end{prop}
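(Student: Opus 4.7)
The plan is to unpack both sides of the biconditional into the same explicit condition on $\phi$, namely $\phi(x_1) < \min\{\phi(x_2),\phi(x_3)\}$.

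First I would observe that $\Pi = \{123, 132\}$ consists of exactly those elements of $S_3$ in which $1$ appears first. So, unwinding the definition of $\ord(\phi,\mathbf{x})$, the statement $\ord(\phi,\mathbf{x}) \in \Pi$ is equivalent to $\phi(x_1) < \phi(x_2)$ and $\phi(x_1) < \phi(x_3)$; the two elements of $\Pi$ correspond to the two possible relative orders of $x_2$ and $x_3$.

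Second, I would identify the triplet topology induced on three leaves of a caterpillar. By the definition of $\cat(\phi)$, the leaf closest to the root among $\{x_1,x_2,x_3\}$ is the one with the smallest $\phi$-value. This leaf branches off from the spine strictly above the branching points of the other two, whose most recent common ancestor lies strictly below it. Consequently the path from this leaf to the root does not meet the path between the other two, while the path from either of the other two leaves to the root does meet the remaining pair's path. Hence $\cat(\phi)$ is consistent with $(x_1 \mid x_2, x_3)$ if and only if $x_1$ is the one with smallest $\phi$-value, i.e.\ $\phi(x_1) < \min\{\phi(x_2),\phi(x_3)\}$.

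Combining the two equivalences yields the proposition. There is no real obstacle here; the only step that requires any care is the second one, where one must confirm that in a caterpillar the leaf with smallest $\phi$-value is the unique ``outgroup'' among any three leaves. This is immediate from the caterpillar structure but worth writing down carefully, since the rest of the argument is purely a translation of definitions.
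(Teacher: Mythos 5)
Your proof is correct and follows essentially the same route as the paper's: both reduce $\ord(\phi,\x)\in\Pi$ to ``$x_1$ comes before $x_2$ and $x_3$ in $\phi$,'' translate that into ``$x_1$ is the leaf nearest the root among the three in $\cat(\phi)$,'' and identify this with consistency of the caterpillar with $(x_1|x_2,x_3)$. You simply spell out the path-disjointness argument for the caterpillar in more detail than the paper, which treats that step as immediate.
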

\begin{proof}
We have $\ord(\phi,\x) \in \Pi$ if and only if $x_1$ comes before $x_2$ and $x_3$ in $\phi$. Equivalently $x_1$ is a leaf in $\cat(\phi)$ nearer to the root than $x_2$ and $x_3$. This is exactly what it means for a caterpillar to be consistent with $(x_1|x_2,x_3)$.
\end{proof}
\begin{prop}
\label{prop:p(n)<pPi(n)}
Let $\Pi = \{ 123,132 \}$. For any $\Phi \in \ETP[\Pi]$, the set of trees $\cat(\Phi) = \{ \cat(\phi) : \phi \in \Phi \}$ is in $\EPT$ and so
	$$ p(n) \leq p_{\Pi}(n). $$
\end{prop}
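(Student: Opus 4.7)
The plan is to use Proposition~\ref{prop:ordering_to_cat} as a direct translation device between ordering-based and tree-based consistency, and then argue that this translation takes an $\ETP[\Pi]$ solution to an $\EPT$ solution of no larger size. Once the translation is set up, there is essentially no obstacle; the statement reduces to chasing the definitions.

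First I would fix an arbitrary phylogenetic triplet $(a|b,c)$ and associate to it the ternary constraint $\x = (a,b,c)$. Since $\Phi \in \ETP[\Pi]$, Definition~\ref{defn:etp} guarantees some $\phi \in \Phi$ with $\ord(\phi,\x) \in \Pi = \{123,132\}$. Applying Proposition~\ref{prop:ordering_to_cat} to this $\phi$ and $\x$ yields that $\cat(\phi)$ is consistent with $(a|b,c)$. Because $\cat(\phi) \in \cat(\Phi)$, this shows that $\cat(\Phi)$ contains a tree consistent with $(a|b,c)$. As the triplet was arbitrary, Definition~\ref{defn:ept} gives $\cat(\Phi) \in \EPT$.

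For the inequality, note that $\cat$ is a function from orderings to trees, so $|\cat(\Phi)| \leq |\Phi|$. Taking $\Phi \in \ETP[\Pi]$ with $|\Phi| = p_\Pi(n)$ produces a member of $\EPT(n)$ of size at most $p_\Pi(n)$, whence $p(n) \leq p_\Pi(n)$.

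The main (and really only) subtlety is confirming that the indices match up correctly: the constraint $(a,b,c)$ must be the one whose first entry is the element that is being distinguished in the triplet $(a|b,c)$, which is exactly the convention Proposition~\ref{prop:ordering_to_cat} uses. Once this bookkeeping is checked, the rest is a one-line unwinding of the two definitions.
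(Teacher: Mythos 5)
Your proof is correct and is exactly the argument the paper intends: the paper simply asserts that the proposition is ``a direct consequence of Proposition~\ref{prop:ordering_to_cat}'', and you have spelled out that deduction faithfully, including the correct index-matching between the triplet $(a|b,c)$ and the constraint $(a,b,c)$.
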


Proposition~\ref{prop:p(n)<pPi(n)} is a direct consequence of Proposition~\ref{prop:ordering_to_cat}. These two Propositions highlight a connection between the triplet problem and the ternary-constraint problem. The value of $p(n)$ would be exactly $p_{\{ 123,132 \} }(n)$ if we only allowed phylogenetic trees that were caterpillars. This relationship between the ternary constraint problem and the phylogenetic triplet problem is a known result \cite{VanIersel2014satisfying}.

\begin{defn}
For any rooted planar embedding $E$ of a phylogenetic tree with leaf label set $[n]$, let $t(E)$ be the total ordering on $[n]$ given by listing the leaf labels from left to right (with the root at the top). For example if 
\begin{center}
\begin{tikzpicture}[scale=0.3]
	\draw[thick] (0,0) -- (3,6) -- (3,7);
	\draw[thick] (6,0) -- (3,6);
	\draw[thick] (2,0) -- (4,4);
	\draw[thick] (4,0) -- (3,2);
	\draw[thick,draw=black,fill=red] (0,0) circle (0.4) ;
	\draw[thick,draw=black,fill=red] (2,0) circle (0.4) ;
	\draw[thick,draw=black,fill=red] (4,0) circle (0.4) ;
	\draw[thick,draw=black,fill=red] (6,0) circle (0.4) ;
	\node[below] at (0,-0.4) {$3$};
	\node[below] at (2,-0.4) {$2$};
	\node[below] at (4,-0.4) {$4$};
	\node[below] at (6,-0.4) {$1$};
	\node at (-1,3) {$E = $};
\end{tikzpicture}
\end{center}
then $t(E) = (3,2,4,1)$.
\end{defn}

There are many\footnote{For any rooted binary tree with $n$ leaves, there are exactly $2^{n-1}$ different planar embeddings.} different planar embeddings of a phylogenetic tree. As $E$ varies over all planar embeddings of a fixed tree, $t(E)$ will result in a variety of different orders. For our purposes, it will not matter which planar embedding is used.

\begin{lem}
\label{lem:p>nbet} 
$p(n) \geq nbet(n)$.
\end{lem}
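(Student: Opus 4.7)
The plan is to reduce an $\EPT$-solver to an $\NBET$-solver of the same size. Given any $\T \in \EPT(n)$ with $|\T| = p(n)$, choose for each $T \in \T$ an arbitrary rooted planar embedding $E_T$ and set $\phi_T = t(E_T)$. Let $\Phi = \{\phi_T : T \in \T\}$, so $|\Phi| \leq |\T| = p(n)$. The lemma will follow once I show $\Phi \in \NBET(n)$.

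The main task is thus to verify that $\Phi$ nonbetween-satisfies every ternary constraint $\x = (x_1,x_2,x_3)$. Since $\T \in \EPT(n)$, some tree $T \in \T$ is consistent with the triplet $(x_2 \mid x_1, x_3)$. I claim that for this $T$, any planar embedding $E_T$ yields an ordering $\phi_T = t(E_T)$ in which $x_2$ is \emph{not} between $x_1$ and $x_3$; that is, $\ord(\phi_T,\x) \in \{132,231,213,312\}$, so $\phi_T$ nonbetween-satisfies $\x$.

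The crux is the following observation about planar embeddings. Consistency of $T$ with $(x_2 \mid x_1, x_3)$ means that the most recent common ancestor $v$ of $x_1$ and $x_3$ lies strictly below the most recent common ancestor of $x_2$ with either of them. In any rooted planar embedding of $T$, the leaves descending from any fixed internal vertex form a contiguous block when read left-to-right (this is immediate by induction on tree depth: each child's subtree is drawn as an uninterrupted subinterval). Applying this to $v$, the leaves $x_1$ and $x_3$ both lie in the contiguous block corresponding to the subtree at $v$, while $x_2$ lies outside it. Hence in $t(E_T)$ the leaf $x_2$ is either strictly to the left of both $x_1, x_3$ or strictly to the right of both, so $x_2$ is not between $x_1$ and $x_3$ in $\phi_T$, as required.

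The only real step to justify carefully is the contiguous-block property for planar embeddings; everything else is bookkeeping around Definition~\ref{defn:ept} and the definition of nonbetween-satisfaction. Once the block property is in hand, the construction $T \mapsto \phi_T$ gives an injection from $\T$ to an order-system in $\NBET(n)$, yielding $\nbet(n) \leq |\Phi| \leq |\T| = p(n)$.
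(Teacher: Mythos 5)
Your proof is correct and follows essentially the same route as the paper: pick a planar embedding of each tree in an $\EPT$-solver, apply $t(\cdot)$, and observe that consistency with $(x_2\mid x_1,x_3)$ forces $x_2$ outside the interval spanned by $x_1$ and $x_3$ in the resulting ordering. The only difference is that you explicitly justify the key geometric fact (via the contiguous-block property of subtrees in a planar embedding) which the paper simply asserts.
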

\begin{proof}
Let $E$ be any planar embedding of a phylogenetic tree $T$. If $T$ is consistent with $(x_2|x_1,x_3)$ then $x_1$ and $x_3$ must be on the same side of $x_2$ in any planar embedding of $T$. So in $t(E)$, $x_2$ is not between $x_1$ and $x_3$ and so $t(E)$ would nonbetween-satisfy $(x_1,x_2,x_3)$. \\
Now let $p=p(n)$, let $\{ T_1,T_2, \ldots T_p \} \in \EPT(n)$ and for each $i$ let $E_i$ be a planar embedding of $T_i$. Consider the order-system $\Phi = \{ t(E_1) , t(E_2), \ldots ,t(E_p) \}$. For any $\x = (x_1,x_2,x_3)$, there is some $T_i$ which is consistent with $(x_2|x_1,x_3)$ and so there is some $\phi = t(E_i) \in \Phi$ which nonbetween-satisfies $\x$. Hence $\Phi \in \NBET(n)$ and so $\nbet(n) \leq p$.
\end{proof}

\begin{thm}
\label{thm:ept}
$p(n) = \Theta(\log\log n)$.
\end{thm}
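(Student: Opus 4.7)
The plan is to sandwich $p(n)$ between two known $\Theta(\log\log n)$ quantities, using the results already assembled in Sections~\ref{sec:nbet} and~\ref{sec:etp} together with Lemma~\ref{lem:p>nbet} and Proposition~\ref{prop:p(n)<pPi(n)}.

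For the lower bound, I would simply invoke Lemma~\ref{lem:p>nbet} and Theorem~\ref{thm:nbet} to get
\[
p(n) \;\geq\; \nbet(n) \;=\; \lceil \log_2\log_2 n \rceil + 1 \;=\; \Omega(\log\log n).
\]
No extra work is required here; the earlier lemma about planar embeddings already does the job.

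For the upper bound, I would use Proposition~\ref{prop:p(n)<pPi(n)} with $\Pi = \{123,132\}$, which gives $p(n) \leq p_\Pi(n)$. The key observation is then to classify this particular $\Pi$ in the framework of Theorem~\ref{thm:etp}: since $123 \in M_2$ and $132 \in M_3$ but $\Pi \cap M_1 = \emptyset$, the set $\Pi$ meets exactly two of $M_1, M_2, M_3$, so $c=1$. Applying Lemma~\ref{lem:c=1} yields
\[
p_\Pi(n) \;\leq\; 2\lceil \log_2\log_2 n \rceil + 2 \;=\; O(\log\log n),
\]
and hence $p(n) = O(\log\log n)$.

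Combining the two bounds gives $p(n) = \Theta(\log\log n)$, proving the theorem. There is no real obstacle here: the work has been done in the preceding sections, and the only substantive step is the routine check that $\Pi = \{123,132\}$ falls into the $c=1$ case of Theorem~\ref{thm:etp}. The proof is thus a short deduction, essentially a two-line chain of inequalities.
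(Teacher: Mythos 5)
Your proof is correct and follows essentially the same route as the paper: the lower bound via Lemma~\ref{lem:p>nbet} and Theorem~\ref{thm:nbet}, and the upper bound via Proposition~\ref{prop:p(n)<pPi(n)} together with the $c=1$ case of Theorem~\ref{thm:etp} (equivalently Lemma~\ref{lem:c=1}) applied to $\Pi=\{123,132\}$. Your explicit verification that $\Pi$ meets $M_2$ and $M_3$ but not $M_1$ is a welcome bit of care that the paper leaves implicit.
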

\begin{proof}
Let $\Pi = \{ 123,132 \}$. Proposition~\ref{prop:p(n)<pPi(n)} and Lemma~\ref{lem:p>nbet} provide the bounds:
	$$ \nbet(n) \leq p(n) \leq p_{\Pi}(n). $$
By Theorem~\ref{thm:nbet} we know $\nbet(n) = \lceil \log_2\log_2n \rceil + 1 = \Theta(\log\log n)$, and by Theorem~\ref{thm:etp} we know $p_{\Pi}(n) = \Theta(\log\log n)$ too. 
\end{proof}
The explicit lower and upper bounds obtained this way are 
	$$ \left\lceil \log_2\log_2n \right\rceil + 1 \leq p(n) \leq 2 \left\lceil \log_2\log_2n \right\rceil + 2. $$
It is certainly true that $p(n) \rightarrow \infty$ as $n \rightarrow \infty$, however even if we substitute an unrealistically large value of $n$, the result is surprisingly small. If $n$ is the number of atoms on Earth, then $p(n) \leq 18$. 

\bibliography{BNB}
\bibliographystyle{plain}

\appendix 

\section{Computing the first few values of $\bet(n)$}
\label{app:computing_bet(n)}

In this section we compute the values of $\bet(n)$ for all $3 \leq n \leq 7$ (Figure~\ref{fig:tableOfValues}). Lemma~\ref{lem:bet(3,4)} deals with $n=3$ and $n=4$, and Lemma~\ref{lem:bet(5,6,7)} deals with $n=5$, $n=6$ and $n=7$.

\begin{lem}
\label{lem:bet(3,4)}
For $n=3,4$ we have $\bet(n) = n$.
\end{lem}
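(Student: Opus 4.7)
The plan is to treat $n=3$ and $n=4$ separately, each time establishing matching upper and lower bounds. Note that Lemma~\ref{lem:bet_lower_bound} only gives $\bet(n)\geq \lceil \log_2(n-1)\rceil+1$, which yields $\bet(3)\geq 2$ and $\bet(4)\geq 3$, so sharper lower bounds are needed in both cases. For the upper bound I propose the same explicit construction in each dimension: take $\Phi$ to be the set of $n$ cyclic shifts of the identity ordering, so $|\Phi|=n$, and verify by a short case check that every constraint $(x_1,x_2,x_3)$ is between-satisfied by some shift.

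For the lower bound when $n=3$, the point is that an ordering on $[3]$ has exactly one element in its (only) middle position, and can between-satisfy only those constraints whose middle coordinate $x_2$ equals that element. Since there are three distinct choices of middle coordinate, at least three orderings are required, giving $\bet(3)\geq 3$.

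For the lower bound when $n=4$, I would use a double-counting argument. Fix $a\in[4]$ and consider the three ternary constraints of the form $(b,a,c)$ with $\{b,c\}\subseteq[4]\setminus\{a\}$. An ordering $\phi$ with $\phi(a)=p$ strictly between-separates exactly $(p-1)(4-p)$ unordered pairs of the remaining elements, which is $0$ if $p\in\{1,4\}$ and $2$ if $p\in\{2,3\}$. Since any single ordering covers at most $2<3$ of the three required pairs for the middle value $a$, element $a$ must occupy a middle position (position $2$ or $3$) in at least two orderings of $\Phi$. Summing this requirement over all $a\in[4]$ gives at least $2\cdot 4=8$ ``middle-slot'' occurrences, but each of the $|\Phi|$ orderings supplies exactly two middle slots, so $2|\Phi|\geq 8$ and hence $|\Phi|\geq 4$.

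The main obstacle is the $n=4$ lower bound, since Lemma~\ref{lem:bet_lower_bound} is too weak by one. The fix is to replace the coarse injection there (which only records whether $\phi_i(x)<\phi_i(n)$) with the sharper positional observation above, and then to match ``slots needed'' per element against ``slots supplied'' per ordering. The cyclic-shift upper bound construction is routine to verify but slightly tedious for $n=4$, involving a check of all four choices of middle element; the symmetry of the cyclic shifts makes this case analysis mechanical.
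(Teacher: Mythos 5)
Your proposal is correct. The upper bounds (cyclic shifts of the identity for $n=3$ and $n=4$) are exactly the constructions the paper uses, and your $n=3$ lower bound (three possible middle coordinates, one middle slot per ordering) is the paper's counting argument in only slightly different clothing. Where you genuinely diverge is the $n=4$ lower bound. The paper first notes that $n=4$ has $24$ constraints while each ordering between-satisfies exactly $8$, so a hypothetical $\Phi$ of size $3$ would have to cover every constraint exactly once; it then exhibits the three constraints $(1,4,2)$, $(2,4,3)$, $(3,4,1)$ and observes that any single ordering between-satisfies either none or exactly two of them, making an exact cover impossible. You instead run a double count on ``middle slots'': for each $a$, an ordering places $a$ between $0$ or $2$ of the three unordered pairs from $[4]\setminus\{a\}$ (your $(p-1)(4-p)$ computation), so $a$ must sit in position $2$ or $3$ in at least two orderings; summing over $a$ gives demand $8$ against supply $2|\Phi|$, hence $|\Phi|\geq 4$. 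Both arguments hinge on the same local observation (coverage per ordering is even, namely $0$ or $2$), but your version dispenses with the global ``perfect cover'' step and yields the bound directly rather than by contradiction, which makes it a little cleaner and slightly more robust (it does not depend on the coincidence $24=3\cdot 8$). Your explicit remark that Lemma~\ref{lem:bet_lower_bound} only gives $\bet(4)\geq 3$ correctly identifies why an ad hoc argument is needed here.
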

\begin{proof}
We consider the two cases $n=3$ and $n=4$ separately.
\begin{itemize}
\item For $n=3$, any ordering between-satisfies $2$ constraints and the total number of constraints is $6$. Hence $\bet(3) \geq 3$. To see that $\bet(3) \leq 3$, simply observe that 
	$$ \{ (1,2,3) , (2,3,1) , (3,1,2) \} \in \BET(3). $$
\item For $n=4$, any ordering between-satisfies $8$ constraints and the total number of constraints is $24$. Therefore if $\bet(4)=3$, then for any $\Phi \in \BET(4)$ with $|\Phi|=3$, we must have each constraint between-satisfied by exactly one ordering in $\Phi$. Now consider the constraints: 
	$$ (1,4,2) \; , \;\; (2,4,3) \; \mbox{ and } \; (3,4,1). $$ 
Any ordering either between-satisfies either none of them or exactly two of these three constraints. So it is not possible for $\Phi$ to between-satisfy all three of these constraints unless one of them is between-satisfied by more than one ordering in $\Phi$. This is a contradiction, so $\bet(4) \geq 4$. To show $\bet(4)=4$, simply observe that 
	$$  \{ (1,2,3,4) , (2,3,4,1) , (3,4,1,2) , (4,1,2,3) \} \in \BET(4).$$
\end{itemize}
\end{proof}

\begin{figure}
 \centering
 \begin{tabular}{|cc|ccccc|}
 \hline
 & & \multicolumn{5}{c|}{$A(i)$} \\
 & & 0 & 1 & 2 & 3 & 4 \\
 \hline
 \parbox[t]{2mm}{\multirow{5}{*}{\rotatebox[origin=c]{90}{$B(i)$}}} & 0 & \ding{51} & \ding{51} & \ding{53} & \ding{53} & \ding{53} \\
 & 1 & \ding{51} & \ding{51} & \ding{53} & \ding{53} & \ding{53} \\
 & 2 & \ding{51} & \ding{51} & \ding{51} & \ding{53} & \ding{53} \\
 & 3 & \ding{51} & \ding{51} & \ding{53} & \ding{53} & \ding{53} \\
 & 4 & \ding{51} & \ding{53} & \ding{53} & \ding{53} & \ding{53} \\
 \hline
 \end{tabular}
\caption{Possible values of $A(i)$ and $B(i)$ as defined in the proof of Lemma~\ref{lem:bet(5,6,7)}.}
\label{fig:A(i),B(i)values}
\end{figure}

\begin{lem}
\label{lem:bet(5,6,7)}
For $n=5,6,7$ we have $\bet(n)=5$.
\end{lem}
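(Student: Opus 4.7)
The plan is to pinch $\bet(5), \bet(6), \bet(7)$ all to $5$ by proving the upper bound $\bet(7) \leq 5$ together with the lower bound $\bet(5) \geq 5$, using the fact that $\bet$ is a non-decreasing function of $n$. Since $\bet(5) \leq \bet(6) \leq \bet(7)$, these two inequalities force equality throughout.

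For the upper bound I will exhibit an explicit order-system $\Phi \in \BET(7)$ with $|\Phi|=5$. A natural construction to try is a highly symmetric family such as five cyclic shifts of a carefully chosen ``template'' ordering on $[7]$, or two rotations of the identity together with their reverses supplemented by one extra ordering. The verification is a finite case analysis: for every unordered triple $\{a,b,c\} \subseteq [7]$ one must check that in at least one $\phi \in \Phi$ the middle element appears between the other two. The symmetry of the construction (cyclic or reflection symmetry) will reduce the number of distinct cases that need to be checked by hand.

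For the lower bound I assume for contradiction that there exists $\Phi = \{\phi_1,\phi_2,\phi_3,\phi_4\} \in \BET(5)$ of size $4$. By Proposition~\ref{prop:wlog} I may take $\phi_1$ to be the identity. For each element $i \in [5]$, I define $A(i)$ and $B(i)$ as the counts (across the four orderings in $\Phi$) of how often $i$ appears in certain distinguished sets of positions --- for instance, $A(i)$ counts the orderings in which $i$ sits at an extreme position and $B(i)$ counts those in which it sits at a near-extreme position. The crucial observation is that an ordering $\phi$ only between-satisfies a constraint $(x,i,y)$ when $i$ is strictly between $x$ and $y$ in $\phi$, and if $i$ is at position $k$ then $\phi$ contributes $2(k-1)(5-k) \in \{0,6,8,6,0\}$ such covered pairs. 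Tracking both the total coverage and which specific pairs $\{x,y\} \subseteq [5] \setminus \{i\}$ get separated forces $(A(i), B(i))$ to lie in the ticked cells of Figure~\ref{fig:A(i),B(i)values}.

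The contradiction then comes from a global counting: the sums $\sum_i A(i)$ and $\sum_i B(i)$ are determined (each position of each ordering is occupied by exactly one element), but these forced totals cannot be partitioned into five pointwise pairs $(A(i), B(i))$ each belonging to the restricted feasible set, so no such $\Phi$ exists. The main obstacle is ruling out the subtler forbidden combinations in Figure~\ref{fig:A(i),B(i)values} (those that satisfy the naive total-coverage inequality $8M(i)+6B(i) \geq 12$ but still fail to actually cover \emph{every} specific pair with $i$ between them): this requires examining which $2$-$2$ and $1$-$3$ bipartitions of $[5] \setminus \{i\}$ arise and showing that in certain configurations two of the six pairs remain uncovered. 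Once the figure is established, the monotonicity $\bet(5) \leq \bet(6) \leq \bet(7)$ together with the explicit upper-bound construction closes the argument.
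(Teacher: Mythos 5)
Your overall strategy is the same as the paper's: pinch via monotonicity using an explicit five-ordering witness in $\BET(7)$ for the upper bound and a position-counting contradiction for $\bet(5)\geq 5$. However, as written the proposal has two concrete gaps. First, the upper bound is not actually established: for a finite existence claim the witness \emph{is} the proof, and ``five cyclic shifts of a carefully chosen template'' is a plan, not a construction. Note that plain cyclic shifts of the identity on $[7]$ will not do (you would also have to decide which five of the seven shifts to keep); the order-system the paper uses pins the elements $6$ and $7$ in fixed positions (second and sixth, respectively) in every ordering and cycles $1,\dots,5$ through the remaining slots, and one still has to verify all triples. Until some explicit $\Phi$ is written down and checked, $\bet(7)\leq 5$ is unproved.

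Second, your bookkeeping for the lower bound is off in a way that breaks the final inequality. The quantity that must play the role of $B(i)$ is the number of orderings placing $i$ at the \emph{exact middle} position $3$ (so that $\sum_i B(i)=4$, one middle slot per ordering), not the ``near-extreme'' positions $2$ and $4$ (which would give $\sum_i B(i)=8$). The contradiction is precisely $4=\sum_i B(i)\geq\sum_i 2\bigl(A(i)-1\bigr)=2\cdot 8-10=6$, where the pointwise bound $B(i)\geq 2(A(i)-1)$ comes from the bipartition analysis you sketch: if $A(i)=2$, the two remaining orderings must each place $i$ at position $3$ and induce two \emph{distinct} $2$--$2$ bipartitions of $[5]\setminus\{i\}$, since a $1$--$3$ split covers only the three pairs meeting its singleton and so two non-middle placements can never cover all six pairs. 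Your displayed inequality $8M(i)+6B(i)\geq 12$ also uses an undefined $M(i)$ and mixes up which positions contribute $8$ versus $6$. With $B(i)$ redefined as the middle count (or, equivalently, by running your argument on $C(i)=4-A(i)-B(i)$ and using $\sum_i C(i)=4$), the approach does close, and it is then the same proof as the paper's.
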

\begin{proof}
Consider the order-system $\Psi = \{ \psi_1,\psi_2,\psi_3,\psi_4,\psi_5 \} \in \BET(7)$ defined by 
	\begin{align*}
	\psi_1 & = \left( 1,6,4,3,2,7,5 \right) \\
	\psi_2 & = \left( 2,6,5,4,3,7,1 \right) \\
	\psi_3 & = \left( 3,6,1,5,4,7,2 \right) \\
	\psi_4 & = \left( 4,6,2,1,5,7,3 \right) \\
	\psi_5 & = \left( 5,6,3,2,1,7,4 \right). 
	\end{align*}
Therefore $\bet(7) \leq 5$. Since $\bet$ is a non-decreasing function of $n$, it is now sufficient to show $\bet(5)>4$. To do this, we will assume there exists an order-system $\Phi = \{ \phi_1,\phi_2,\phi_3,\phi_4 \} \in \BET(5)$ and find a contradiction. Now for $i = 1,2,3,4,5$ let $A(i)$ and $B(i)$ denote the following cardinalities. 
	$$ A(i) = \left| \left\{ j \: | \: \phi_j(i) = 1 \mbox{ or } \phi_j(i)=5 \right\} \right| \quad \mbox{and} \quad 
	B(i) = \left| \left\{ j \: | \: \phi_j(i) = 3 \right\} \right| $$
Each of the orderings $\phi_1$, $\phi_2$, $\phi_3$ and $\phi_4$ contribute a $+1$ to two different $A(i)$s and a $+1$ to a single $B(i)$. So
	$$ \sum_{i=1}^5 A(i) = 8 \qquad \mbox{ and } \qquad \sum_{i=1}^5 B(i) = 4. $$
For each $i$, the possible values of $A(i)$ and $B(i)$ are given in Figure~\ref{fig:A(i),B(i)values}. These possibilities are tedious but not difficult to verify - for example it is not possible to have $A(i)=4$ because that would imply that none of the constraints $(x,i,y)$ are between-satisfied. Anyway we can see from the values given in Figure~\ref{fig:A(i),B(i)values} that $B(i) \geq 2(A(i)-1)$ for all $i$. However, this is a contradiction since 
	$$ 4 = \sum_{i=1}^5 B(i) \geq \sum_{i=1}^5 2(A(i)-1) = 6. $$
\end{proof}

\end{document}